\newcommand{\Z}{\mathbb{Z}}         
\newcommand{\Sp}{\operatorname{Sp}}
\newcommand{\SL}{\operatorname{SL}}
\newcommand{\GL}{\operatorname{GL}}
\newcommand{\SO}{\operatorname{SO}}
\newcommand{\rank}{\operatorname{rank}}
\newtheorem{lause}{Theorem}[section]
\newtheorem{lemma}[lause]{Lemma}
\newtheorem{seur}[lause]{Corollary}
\newtheorem{prop}[lause]{Proposition}
\newtheorem{prob}[lause]{Problem}
\newtheorem*{lause*}{Theorem}
\theoremstyle{definition}
\newtheorem{esim}[lause]{Example} 
\theoremstyle{remark}
\newtheorem{remark}[lause]{Remark}
\numberwithin{equation}{section}
\begin{document}

\title[]{Jordan blocks of unipotent elements in some irreducible representations of classical groups in good characteristic}

\author{Mikko Korhonen}
\address{School of Mathematics, The University of Manchester, Manchester M13 9PL, United Kingdom}
\email{korhonen\_mikko@hotmail.com}
\thanks{Some of the results in this paper were obtained during my doctoral studies at \'Ecole Polytechnique F\'ed\'erale de Lausanne, supported by a grant from the Swiss National Science Foundation (grant number $200021 \_ 146223$).}
\copyrightinfo{}{}
\date{\today}

\begin{abstract}

Let $G$ be a classical group with natural module $V$ over an algebraically closed field of good characteristic. For every unipotent element $u$ of $G$, we describe the Jordan block sizes of $u$ on the irreducible $G$-modules which occur as composition factors of $V \otimes V^*$, $\wedge^2(V)$, and $S^2(V)$. Our description is given in terms of the Jordan block sizes of the tensor square, exterior square, and the symmetric square of $u$, for which recursive formulae are known.
\end{abstract}

\vspace*{-1.5cm}
\maketitle

\section{Introduction}

Let $G$ be a simple linear algebraic group over an algebraically closed field $K$ of characteristic $p > 0$, and let $f: G \rightarrow \GL(W)$ be a rational irreducible representation. In this paper, we consider the following question in some special cases.

\begin{prob}\label{prob:unipjordans}
Let $u \in G$ a unipotent element. What is the Jordan normal form of $f(u)$?
\end{prob}

A basic motivation for Problem \ref{prob:unipjordans} is in the problem of determining the classes of unipotent elements that are contained in maximal subgroups of simple algebraic groups. This question is relevant in the study of the subgroup structure of simple algebraic groups, and solutions to specific instances of Problem \ref{prob:unipjordans} have found many applications. For example, when $G$ is simple of exceptional type, computations done by Lawther \cite{Lawther, LawtherCorrection} show that in most cases, the conjugacy class of a unipotent element of $G$ is determined by its Jordan block sizes in the adjoint and minimal modules of $G$. This was used in \cite{LawtherFusion} to determine the fusion of unipotent classes in maximal subgroups of $G$, which in turn was applied in \cite{BurnessLiebeckShalev} to a conjecture of Cameron on minimal base sizes of finite almost simple primitive groups. 

The origin of this paper is in the PhD thesis of the present author, which concerns the problem of finding reductive subgroups of simple algebraic groups that contain distinguished unipotent elements. For classical groups in good characteristic, this leads naturally to the question of when all Jordan block sizes of $f(u)$ have multiplicity one \cite[Proposition 3.5]{LiebeckSeitzClass}. The results of this paper have been useful in settling this problem in some important special cases.

Let $G$ be a simple classical group ($\SL(V)$, $\Sp(V)$, or $\SO(V)$) and assume that $p$ is \emph{good for $G$}. In other words, we assume $p > 2$ if $G = \Sp(V)$ or $G = \SO(V)$. Let $f: G \rightarrow \GL(W)$ be a rational irreducible representation with highest weight $\lambda$. As the main result of this paper, we describe the Jordan normal form of $f(u)$ for every unipotent element $u \in G$ in the following cases:

\begin{itemize}
\item $G = \SL(V)$ and $\lambda = \varpi_1 + \varpi_{n-1}$, where $n = \dim V$ (Theorem \ref{thm:mainthmA}).
\item $G = \Sp(V)$ and $\lambda = \varpi_2$ (Corollary \ref{corollary:SPmain}).
\item $G = \SO(V)$ and $\lambda = 2 \varpi_1$ (Corollary \ref{corollary:SOmain}).
\end{itemize}

These irreducible representations are found as composition factors of the tensor product $V \otimes V^*$ (Lemma \ref{lemma:typeAomega}), the exterior square $\wedge^2(V)$ (Lemma \ref{lemma:typeComega}), and the symmetric square $S^2(V)$ (Lemma \ref{lemma:typeBDomega}), respectively. Our answer is given in terms of the Jordan block sizes of the action of $u$ on $V \otimes V^*$, $\wedge^2(V)$, and $S^2(V)$, for which recursive formulae are known --- see Section \ref{section:tensordecomp}. Combining our main results with such formulae, by \cite[Theorem 5.1]{Lubeck} and Lemma \ref{lemma:typeAomega} -- \ref{lemma:typeBDomega} we have a description of the Jordan normal form of $f(u)$ for almost all irreducible representations $f$ of $G$ with $\dim W \leq (\rank G)^3 / 8$.

The rest of the paper is structured as follows. In Section \ref{section:notation} we establish notation used in the paper, and in Section \ref{section:prelim} we list some well known results that are needed in the paper. The proofs of our main results are based on calculations done in Section \ref{section:calculations}, where we consider the action of a regular unipotent element of $\SL(V)$ on $V \otimes V^*$. The main results are proven in Section \ref{section:mainresult}. 

\begin{remark}In characteristic zero, Problem \ref{prob:unipjordans} has a satisfactory solution. By results of Jacobson-Morozov-Kostant, every unipotent element $u \in G$ can be embedded into a simple subgroup $X < G$ of type $A_1$, which is unique up to conjugacy in $G$ \cite[Theorem 3.6]{KostantSL2}. Furthermore, by Weyl's complete reducibility theorem, the Jordan block sizes of $u$ on $W$ are determined by the character of the restriction of $W$ to $X$. This character can be computed with the Weyl character formula and the labeled Dynkin diagram associated with $u$. 

In this paper we are concerned with the positive characteristic case, where much less is known. In general we do not even know the dimensions of the rational irreducible representations of $G$, so we are still very far from a complete solution. One general result is in \cite{SuprunenkoMin}, where Suprunenko determines the largest Jordan block size of $f(u)$ when $G$ is simple of classical type and $p > 2$. More specialized results are found for example in \cite{PremetSuprunenkoQuadratic}, \cite[Section 2.3]{TiepZalesski}, and \cite{OsinovskayaSuprunenko}.\end{remark}

\section{Notation}\label{section:notation}

We fix the following notation and terminology. Throughout the text, let $K$ be an algebraically closed field of characteristic $p > 0$, let $G$ be a simple algebraic group over $K$, and let $V$ be a vector space over $K$.

Fix a maximal torus $T$ of $G$ with character group $X(T)$, and a base $\Delta = \{ \alpha_1, \ldots, \alpha_l \}$ for the root system of $G$, where $l = \operatorname{rank} G$. Here we use the standard Bourbaki labeling of the simple roots $\alpha_i$, as given in \cite[11.4, p. 58]{Humphreys}. We denote the dominant weights with respect to $\Delta$ by $X(T)^+$, and the fundamental dominant weight corresponding to $\alpha_i$ is denoted by $\varpi_i$. For a dominant weight $\lambda \in X(T)^+$, we denote the rational irreducible $G$-module with highest weight $\lambda$ by $L_G(\lambda)$.

If a $G$-module $V$ has a filtration $V = V_1 \supset V_2 \supset \cdots \supset V_{t} \supset V_{t+1} = 0$ with $\operatorname{soc}(V /V_{i+1}) = V_i / V_{i+1} \cong W_i$ for all $1 \leq i \leq t$, we will denote this by $V = W_1 | W_2 | \cdots | W_t$. For a $K$-vector space $V$ and non-negative integer $n$, we use the notation $n \cdot V$ for the direct sum $V \oplus \cdots \oplus V$, where $V$ occurs $n$ times.

For non-negative integers $a$ and $b$ we denote by $\binom{a}{b}$ the usual binomial coefficient, using the convention that $\binom{a}{b} = 0$ if $a < b$. We denote by $\nu_p$ the $p$-adic valuation on the integers, so $\nu_p(a)$ is the largest integer $k \geq 0$ such that $p^k$ divides $a$.

Let $u \in \GL(V)$ be unipotent. For all $m \geq 1$, we denote by $r_m(u)$ the number of Jordan blocks of size $m$ in the Jordan decomposition of $u$.

\section{Preliminaries}\label{section:prelim}

In this section, we list some preliminaries needed in this paper. All of the results in this section are well known. For calculations, we will need the following on the values of binomial coefficients modulo a prime $p$.

\begin{lause*}[Lucas' theorem]
Let $p$ be a prime number and let $a$ and $b$ be non-negative integers. Write $a = \sum_{k = 0}^n a_k p^k$ and $b = \sum_{k = 0}^n b_k p^k$ for integers $0 \leq a_k, b_k \leq p-1$. Then $$\binom{a}{b} \equiv \binom{a_0}{b_0} \binom{a_1}{b_1} \cdots \binom{a_n}{b_n} \mod{p},$$ and in particular $\binom{a}{b} \equiv 0 \mod{p}$ if and only if $a_k < b_k$ for some $0 \leq k \leq n$.
\end{lause*}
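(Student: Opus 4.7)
My plan is to prove this by a generating-function argument in the polynomial ring $\mathbb{F}_p[x]$. The key input is the ``freshman's dream'' $(1+x)^p \equiv 1 + x^p \pmod{p}$, which is immediate from the ordinary binomial theorem together with the observation that $p \mid \binom{p}{i}$ for $1 \leq i \leq p-1$. Iterating this by induction on $k$ yields $(1+x)^{p^k} \equiv 1 + x^{p^k} \pmod{p}$ for every $k \geq 0$.

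Given this, I would use the base-$p$ expansion of $a$ to rewrite
$$(1+x)^a \;=\; \prod_{k=0}^n \bigl((1+x)^{p^k}\bigr)^{a_k} \;\equiv\; \prod_{k=0}^n (1 + x^{p^k})^{a_k} \pmod{p},$$
and then compare the coefficient of $x^b$ on both sides. On the left-hand side this coefficient is $\binom{a}{b}$. On the right, expanding each factor by the ordinary binomial theorem produces a sum over tuples $(c_0, \ldots, c_n)$ with $0 \leq c_k \leq a_k \leq p-1$, in which the tuple $(c_0, \ldots, c_n)$ contributes $\prod_k \binom{a_k}{c_k}$ to the coefficient of $x^{\sum_k c_k p^k}$. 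The central observation is that since every $c_k$ lies in $\{0, 1, \ldots, p-1\}$, uniqueness of the base-$p$ expansion of $b$ forces $c_k = b_k$ for all $k$. Thus only one term contributes, yielding $\binom{a}{b} \equiv \prod_k \binom{a_k}{b_k} \pmod{p}$.

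For the ``in particular'' claim: if $a_k < b_k$ for some $k$, then $\binom{a_k}{b_k} = 0$ by the stated convention and the product vanishes modulo $p$; conversely, if $b_k \leq a_k$ for all $k$, then each factor $\binom{a_k}{b_k}$ is a binomial coefficient with both entries in $\{0, \ldots, p-1\}$, and so is a nonzero integer strictly less than $p$, hence a unit modulo $p$. The only real obstacle in the argument is bookkeeping — matching digit tuples to the base-$p$ expansion of $b$ — since all of the algebraic content is already encapsulated by the freshman's dream.
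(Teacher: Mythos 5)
Your generating-function argument is the standard proof of Lucas' theorem and is essentially the same route as the reference the paper cites (the paper itself gives no proof, deferring entirely to Fine's note, which proceeds exactly via $(1+x)^a \equiv \prod_k (1+x^{p^k})^{a_k} \pmod{p}$). The main congruence is correctly established: the freshman's dream, the factorization along the base-$p$ digits of $a$, and the observation that uniqueness of base-$p$ expansions forces $c_k = b_k$ in the coefficient extraction are all sound, and the degenerate case where $b_k > a_k$ for some $k$ is handled consistently with the convention $\binom{a}{b} = 0$ for $a < b$.

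There is, however, one false assertion in your treatment of the ``in particular'' claim: you justify that each $\binom{a_k}{b_k}$ is a unit modulo $p$ by saying it is ``a nonzero integer strictly less than $p$.'' That is not true in general --- for $p = 5$ one has $\binom{4}{2} = 6 > 5$. The conclusion you want ($p \nmid \binom{a_k}{b_k}$ whenever $0 \leq b_k \leq a_k \leq p-1$) is still correct, but the right reason is that $\binom{a_k}{b_k} \cdot b_k! \, (a_k - b_k)! = a_k!$ and $p \nmid a_k!$ since $a_k < p$, so $p$ cannot divide $\binom{a_k}{b_k}$. With that one-line repair the proof is complete.
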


\begin{proof}A short proof can be found in \cite{FineBinomial}.\end{proof}

\begin{lemma}\label{lemma:pminus1modp}
Let $p$ be a prime number. Then $\binom{p - 1}{t} \equiv (-1)^t \mod {p}$ for all $0 \leq t \leq p-1$.
\end{lemma}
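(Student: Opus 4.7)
The plan is to give a direct computation modulo $p$ using the explicit product formula for the binomial coefficient. Specifically, I would write
\[
\binom{p-1}{t} = \frac{(p-1)(p-2)\cdots(p-t)}{t!}
\]
and observe that each factor in the numerator satisfies $p - k \equiv -k \pmod{p}$. Hence the numerator is congruent to $(-1)(-2)\cdots(-t) = (-1)^t\, t! \pmod{p}$. Since $0 \leq t \leq p-1$, the denominator $t!$ is coprime to $p$ and therefore invertible modulo $p$, so the quotient reduces cleanly to $(-1)^t$.

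A valid alternative, which I would mention as a cross-check, is an induction on $t$ using Pascal's identity. The base case $\binom{p-1}{0} = 1 = (-1)^0$ is immediate, and for $1 \leq t \leq p-1$ Pascal's rule gives
\[
\binom{p-1}{t} = \binom{p}{t} - \binom{p-1}{t-1}.
\]
Since $p \mid \binom{p}{t}$ in this range (a standard consequence of Lucas' theorem, already stated in the excerpt), the right-hand side is congruent to $-\binom{p-1}{t-1} \pmod{p}$, and the induction hypothesis yields $-(-1)^{t-1} = (-1)^t$ as required.

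There is no real obstacle here; both approaches are short and elementary. I would choose the first (the product-formula approach) since it is more self-contained and does not require invoking Lucas' theorem or an auxiliary lemma about $p \mid \binom{p}{t}$. Either way the argument fits comfortably in a couple of lines, which is appropriate for a lemma of this kind that will be used as a black box in later modular binomial-coefficient computations.
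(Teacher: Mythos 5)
Your proposal is correct. Your preferred argument --- reducing the product formula $\binom{p-1}{t} = \frac{(p-1)(p-2)\cdots(p-t)}{t!}$ factor by factor, using $p-k \equiv -k \pmod p$ and the invertibility of $t!$ for $t \leq p-1$ --- is a genuinely different route from the paper, which instead runs the induction on $t$ via Pascal's identity $\binom{p-1}{t} = \binom{p}{t} - \binom{p-1}{t-1}$ together with $p \mid \binom{p}{t}$; that is precisely the ``alternative'' you sketch as a cross-check, so you have in effect given both proofs. The two arguments are of comparable length and difficulty; your product-formula version has the mild advantage of being self-contained (no appeal to $p \mid \binom{p}{t}$, which the paper justifies via Lucas' theorem), while the paper's inductive version stays entirely within integer arithmetic and avoids any discussion of inverses modulo $p$. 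Either is a perfectly acceptable proof of the lemma.
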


\begin{proof}We proceed by induction on $t$. For $t = 0$ the claim is obvious, and for $0 < t \leq p-1$ the claim follows by induction, since $\binom{p-1}{t} = \binom{p}{t} - \binom{p-1}{t-1}$ and $\binom{p}{t} \equiv 0 \mod{p}$.\end{proof}

We shall also need the following basic results on the number of Jordan blocks of unipotent elements.

\begin{lemma}\label{jordanblockformula}
Let $u \in \GL(V)$ be unipotent and denote $X = u - 1$. Then for all $m \geq 1$, we have $r_m(u) = 2 \dim \operatorname{Ker} X^m - \dim \operatorname{Ker} X^{m+1} - \dim \operatorname{Ker} X^{m-1}.$
\end{lemma}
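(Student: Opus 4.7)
The plan is to reduce to the case of a single Jordan block by decomposing $V$ as a direct sum of $u$-invariant subspaces, each corresponding to a single Jordan block of $u$. Since the quantity $\dim \operatorname{Ker} X^m$ is additive on direct sums, I can compute each dimension block-by-block and then sum.

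First I would record the elementary fact that if $W$ is a single Jordan block of size $k$ for $u$, then $X|_W$ is the standard nilpotent of rank $k-1$, so $\dim \operatorname{Ker} (X|_W)^m = \min(m,k)$. Summing over all Jordan blocks gives the identity
\[
\dim \operatorname{Ker} X^m = \sum_{k \geq 1} r_k(u) \min(m,k).
\]

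Next I would plug this into the right-hand side of the claim and compute, for each fixed $k$, the coefficient
\[
c_k(m) := 2\min(m,k) - \min(m+1,k) - \min(m-1,k).
\]
A quick case analysis gives $c_k(m) = 0$ for $k \leq m-1$ (all three terms equal $k$), $c_k(m) = 0$ for $k \geq m+1$ (the terms become $2m - (m+1) - (m-1)$), and $c_k(m) = 1$ exactly when $k = m$ (since then $2m - m - (m-1) = 1$). Hence only the $k=m$ term survives and the right-hand side equals $r_m(u)$, as required.

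There is no real obstacle here; the only thing to be slightly careful about is the boundary case $m = 1$, where $\operatorname{Ker} X^{m-1} = \operatorname{Ker} X^0 = 0$, but the formula $\min(0,k) = 0$ handles this uniformly so the case analysis above goes through without modification.
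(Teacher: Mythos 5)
Your proof is correct and is essentially the same as the paper's: the paper cites the identity $\dim \operatorname{Ker} X^m = \sum_{k=1}^{m} t_k$ (with $t_k$ the number of blocks of size $\geq k$), which is just a resummation of your $\sum_{k} r_k(u)\min(m,k)$, and in both cases the result is the discrete second difference of this quantity. Your version merely writes out the elementary block-by-block computation that the paper leaves to a reference.
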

\begin{proof}The formula follows from the fact that $\dim \operatorname{Ker} X^m = \sum_{k = 1}^m t_k$, where $t_k$ is the number of Jordan blocks of size $\geq k$ \cite[1.1]{JantzenNilpotent}.\end{proof}

\begin{lemma}\label{jordanrestriction}
Let $u \in \GL(V)$ be unipotent and denote $X = u - 1$. Suppose that $W \subseteq V$ is a subspace invariant under $u$ such that $\dim V/W = 1$. Let $m \geq 0$ be such that $\operatorname{Ker} X^m \subseteq W$ and $\operatorname{Ker} X^{m+1} \not\subseteq W$. Then 

\begin{enumerate}[\normalfont (a)]

\item if $m = 0$, we have

\begin{itemize}
\item $r_1(u_W) = r_1(u) - 1$,
\item $r_i(u_W) = r_i(u)$ for all $i \neq 1$.
\end{itemize}

\item if $m \geq 1$, we have

\begin{itemize}
\item $r_{m+1}(u_W) = r_{m+1}(u) - 1$,
\item $r_m(u_W) = r_m(u) + 1$,
\item $r_i(u_W) = r_i(u)$ for all $i \neq m, m+1$.
\end{itemize}

\end{enumerate}

\end{lemma}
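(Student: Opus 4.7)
The plan is to reduce everything to Lemma \ref{jordanblockformula} by computing $\dim \operatorname{Ker} X_W^k$ in terms of $\dim \operatorname{Ker} X^k$, where $X_W$ denotes the restriction of $X$ to $W$. Since $\operatorname{Ker} X_W^k = \operatorname{Ker} X^k \cap W$ and $W$ has codimension $1$ in $V$, for every $k$ we have $\dim \operatorname{Ker} X_W^k \in \{\dim \operatorname{Ker} X^k, \dim \operatorname{Ker} X^k - 1\}$, with the smaller value occurring exactly when $\operatorname{Ker} X^k \not\subseteq W$.

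First I would record the following dichotomy, which uses only the hypothesis on $m$: (i) $\dim \operatorname{Ker} X_W^k = \dim \operatorname{Ker} X^k$ for $k \leq m$, since $\operatorname{Ker} X^k \subseteq \operatorname{Ker} X^m \subseteq W$; and (ii) $\dim \operatorname{Ker} X_W^k = \dim \operatorname{Ker} X^k - 1$ for $k \geq m+1$, since $\operatorname{Ker} X^{m+1} \not\subseteq W$ forces $\operatorname{Ker} X^k \not\subseteq W$ for all $k \geq m+1$. (The case $m=0$ just uses $\operatorname{Ker} X^0 = 0$.)

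Next, I would plug both $\dim \operatorname{Ker} X^k$ and $\dim \operatorname{Ker} X_W^k$ into the formula of Lemma \ref{jordanblockformula} and subtract, which amounts to checking, for each $i$, the sign pattern with which the correction $-1$ appears in $r_i(u_W) - r_i(u) = (\delta_{i+1} - 2 \delta_i + \delta_{i-1})$, where $\delta_k := \dim \operatorname{Ker} X^k - \dim \operatorname{Ker} X_W^k \in \{0,1\}$ is $0$ for $k \leq m$ and $1$ for $k \geq m+1$ (with $\delta_0 = 0$). In case (a) ($m = 0$) this gives $r_1(u_W) - r_1(u) = -1$ and $r_i(u_W) - r_i(u) = 0$ for $i \geq 2$. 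In case (b) ($m \geq 1$) the only nonzero differences occur at $i = m$ (value $+1$) and $i = m+1$ (value $-1$), matching the stated formulas.

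There is no substantive obstacle; the argument is a direct calculation once the codimension behavior of $\operatorname{Ker} X_W^k$ inside $\operatorname{Ker} X^k$ is pinned down. The only point demanding a little care is confirming case (b) at the boundary values $i=m-1, m, m+1, m+2$, where the step between the $\delta = 0$ and $\delta = 1$ regimes contributes the $+1$ and $-1$ corrections; away from this boundary all three terms in the Lemma \ref{jordanblockformula} formula are shifted by the same constant and cancel.
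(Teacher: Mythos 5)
Your proposal is correct and follows essentially the same route as the paper's proof: both determine that $\dim \operatorname{Ker} X_W^k$ equals $\dim \operatorname{Ker} X^k$ for $k \leq m$ and $\dim \operatorname{Ker} X^k - 1$ for $k \geq m+1$ (the latter via the codimension-one argument $\operatorname{Ker} X^k + W = V$), and then feed these into Lemma \ref{jordanblockformula}. Your $\delta_k$ bookkeeping is just a compact way of organizing the same subtraction the paper carries out case by case.
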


\begin{proof}We have $\operatorname{Ker} X^i \subseteq W$ for all $0 \leq i \leq m$, so $\dim \operatorname{Ker} X_W^i = \dim \operatorname{Ker} X^i$ for all $0 \leq i \leq m$. By Lemma \ref{jordanblockformula}, this implies that $r_i(u) = r_i(u_W)$ for all $1 \leq i \leq m-1$. 

Next note that $\operatorname{Ker} X^i \not\subseteq W$ for all $i \geq m+1$, which means that $V / W = (\operatorname{Ker} X^i + W) / W \cong W / \operatorname{Ker} X^i \cap W$. Hence $\dim \operatorname{Ker} X_W^i = \dim \operatorname{Ker} X^i - 1$ for all $i \geq m+1$. Thus by Lemma \ref{jordanblockformula}, we have $r_m(u_W) = r_m(u) + 1$ if $m \geq 1$. Similarly $r_{m+1}(u_W) = r_{m+1}(u) - 1$, and $r_i(u_W) = r_i(u)$ for all $i \geq m+2$.
\end{proof}

The following lemmas are used to construct the irreducible representations that our main results (Theorem \ref{thm:mainthmA}, Corollary \ref{corollary:SPmain}, Corollary \ref{corollary:SOmain}) are concerned with.

\begin{lemma}[{\cite[1.14]{SeitzClassical}, \cite[Proposition 4.6.10]{McNinch}}]\label{lemma:typeAomega}
Let $G = \SL(V)$, where $\dim V = n$ for some $n \geq 2$. Then as $G$-modules, we have $S^2(V) \cong L_G(2 \varpi_1)$ (if $p > 2$), $\wedge^2(V) \cong L_G(\varpi_2)$ (if $n > 2$), and $$V \otimes V^* \cong \begin{cases} L_G(\varpi_1 + \varpi_{n-1}) \oplus L_G(0) &\mbox{if } p \nmid n, \\
L_G(0) | L_G(\varpi_1 + \varpi_{n-1}) | L_G(0) \text{ } & \mbox{if } p \mid n. \end{cases}$$
\end{lemma}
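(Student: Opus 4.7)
My plan is to verify the three claims in turn by identifying explicit highest weight vectors and analyzing the submodule structure. Fix a basis $v_1, \ldots, v_n$ of $V$ of $T$-weight vectors with $v_i$ of weight $\varepsilon_i$.

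For $\wedge^2(V)$ with $n > 2$, the element $v_1 \wedge v_2$ is a highest weight vector of weight $\varpi_2$. All weights of $\wedge^2(V)$ are $\varepsilon_i + \varepsilon_j$ for $i < j$, each of multiplicity one, and they form a single Weyl group orbit. Since any nonzero $G$-submodule contains a $B$-fixed line, and that line must be the highest weight line spanned by $v_1 \wedge v_2$, the module $\wedge^2(V)$ is irreducible, giving $\wedge^2(V) \cong L_G(\varpi_2)$.

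For $S^2(V)$ with $p > 2$, the element $v_1^2$ is a highest weight vector of weight $2\varpi_1$, so $S^2(V)$ is generated by this vector and surjects onto $L_G(2\varpi_1)$. Analyzing the dominant weights $\mu < 2\varpi_1$ lying in the coset $2\varpi_1 + Q$ of the root lattice $Q$ shows that the only candidate additional composition factor is $L_G(\varpi_2)$ (for $n \geq 3$) or $L_G(0)$ (for $n = 2$). The main obstacle here is ruling out $L_G(\varpi_2)$; I would do this by showing $\operatorname{Hom}_G(\wedge^2(V), S^2(V)) = 0$ when $p > 2$. Any such map $\phi$ sends $v_1 \wedge v_2$ to some $c \cdot v_1 v_2$, the unique weight-$\varpi_2$ vector in $S^2(V)$. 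Representing the simple reflection $s_1$ by the element of $\SL(V)$ sending $v_1 \mapsto v_2$, $v_2 \mapsto -v_1$, and fixing $v_i$ for $i \geq 3$, we see this element fixes $v_1 \wedge v_2$ but negates $v_1 v_2$, so equivariance of $\phi$ gives $c = -c$ and forces $c = 0$ when $p > 2$. The residual case $n = 2$, $\mu = 0$ follows from the standard fact that $L_G(2\varpi_1)$ has dimension $3$ for $\SL_2$ when $p > 2$.

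For $V \otimes V^*$, identify $V \otimes V^* \cong \operatorname{End}(V)$ as $G$-modules. The trace map $\operatorname{tr}: \operatorname{End}(V) \to K$ is $G$-equivariant with kernel $\mathfrak{sl}(V)$, and the scalar matrices $K \cdot I \subset \operatorname{End}(V)$ form a trivial $G$-submodule. The composition $K \to \operatorname{End}(V) \to K$ sending $1 \mapsto I \mapsto n$ is multiplication by $n$: when $p \nmid n$ this is an isomorphism, yielding a splitting $V \otimes V^* = K \cdot I \oplus \mathfrak{sl}(V)$, and a short weight analysis shows $\mathfrak{sl}(V) \cong L_G(\varpi_1 + \varpi_{n-1})$ is irreducible. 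When $p \mid n$, we instead have $K \cdot I \subset \mathfrak{sl}(V)$ and $\mathfrak{sl}(V) / K \cdot I \cong L_G(\varpi_1 + \varpi_{n-1})$, producing the three-step filtration with simple subquotients $L_G(0)$, $L_G(\varpi_1 + \varpi_{n-1})$, $L_G(0)$ from bottom to top. The main obstacle here is to show that this filtration does not split: combining $(V \otimes V^*)^G = \operatorname{End}_G(V) = K$ (Schur's lemma applied to the irreducible $V$), the vanishing $\operatorname{Ext}^1_G(L_G(0), L_G(0)) = 0$, and the self-duality $V \otimes V^* \cong (V \otimes V^*)^*$ together rule out every possible direct-sum decomposition and force $V \otimes V^*$ to be uniserial with the claimed socle series.
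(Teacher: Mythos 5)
The paper itself offers no proof of this lemma---it is quoted from Seitz and McNinch---so your attempt has to stand as a self-contained argument. Your treatments of $\wedge^2(V)$ and $S^2(V)$ are essentially correct. The single-Weyl-orbit, multiplicity-one argument for $\wedge^2(V)$ is complete. For $S^2(V)$ you should spell out why the vanishing of $\operatorname{Hom}_G(\wedge^2(V),S^2(V))$ rules out $L_G(\varpi_2)$ as a composition factor rather than merely as a submodule: since the weights $2\varpi_1$ and $\varpi_2$ each have multiplicity one in $S^2(V)$, if $L_G(\varpi_2)$ occurred at all then $S^2(V)$ would have composition length exactly two with head $L_G(2\varpi_1)$ (it is generated by $v_1^2$ by polarization, using $p>2$), so $L_G(\varpi_2)$ would be the socle and would produce a nonzero homomorphism. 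With that sentence added, your Weyl-group-element computation finishes $n\geq 3$, and the $n=2$ case is fine.

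The genuine gap is in the $V\otimes V^*$ part, at its crux: you assert that $\mathfrak{sl}(V)$ (when $p\nmid n$), respectively $\mathfrak{sl}(V)/K\cdot I$ (when $p\mid n$), is irreducible, offering only ``a short weight analysis.'' A weight analysis shows that the composition factors of $\operatorname{End}(V)$ are $L_G(\varpi_1+\varpi_{n-1})$ with multiplicity one (the highest root is a weight of multiplicity one) together with some number $m$ of trivial factors; it cannot determine $m$. Determining $m$---equivalently whether $\dim L_G(\varpi_1+\varpi_{n-1})$ is $n^2-1$ or $n^2-2$---is precisely the content of the lemma, and the answer depends on whether $p\mid n$, so no argument that never uses that hypothesis can succeed. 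Two ways to close the gap. (i) Show directly that any $G$-submodule $M\subseteq\operatorname{End}(V)$ with $M\not\subseteq K\cdot I$ contains $\mathfrak{sl}(V)$: being $T$-stable, $M$ is a sum of weight spaces; conjugating an off-diagonal $E_{ij}\in M$ by $1+E_{ji}$ produces $E_{jj}-E_{ii}$, while conjugating a non-scalar diagonal element of $M$ by $1+E_{ij}$ produces a nonzero multiple of $E_{ij}$. (ii) Push your own homological tools further: by $\operatorname{Ext}^1_G(K,K)=0$, the largest submodule of $\operatorname{End}(V)$ all of whose factors are trivial is semisimple, hence contained in $\operatorname{End}(V)^G=K\cdot I$ and of dimension at most $1$, and dually for quotients; since $L_G(\varpi_1+\varpi_{n-1})$ occurs only once, every trivial factor lies in one of these two pieces, so $m\leq 2$, after which your Schur/self-duality/$\operatorname{Ext}^1$ bookkeeping does pin down both the composition factors and the uniserial structure. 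As written, however, the key irreducibility statement is asserted rather than proved.
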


\begin{lemma}[{\cite[1.14, 8.1 (c)]{SeitzClassical}, \cite[Proposition 4.2.2, Lemma 4.8.2]{McNinch}}]\label{lemma:typeComega}
Assume $p > 2$, and let $G = \Sp(V)$, where $\dim V = n$ for some $n \geq 4$. Then as $G$-modules, we have $S^2(V) \cong L_G(2 \varpi_1)$, and $$\wedge^2(V) \cong \begin{cases} L_G(\varpi_2) \oplus L_G(0) &\mbox{if } p \nmid n, \\
L_G(0) | L_G(\varpi_2) | L_G(0) \text{ } & \mbox{if } p \mid n. \end{cases}$$
\end{lemma}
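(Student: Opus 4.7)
The plan is to treat the two assertions in turn. For $S^2(V) \cong L_G(2\varpi_1)$, I would use the $G$-module isomorphism $S^2(V) \cong \mathfrak{sp}(V)$ defined (for $p > 2$) by $xy \mapsto (v \mapsto B(v,x)y + B(v,y)x)$, where $B$ is the defining symplectic form. The right-hand side is the adjoint module of $G$, which in good characteristic for type $C_\ell$ is irreducible of highest weight equal to the highest root $2\varpi_1$; this gives the first assertion.

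For $\wedge^2(V)$, the symplectic form $B$ supplies two pieces of invariant data. Via the isomorphism $V \cong V^*$ induced by $B$, one obtains a $G$-invariant vector $\omega = \sum_{i=1}^{\ell} e_i \wedge f_i$ (expressed in any symplectic basis) spanning a trivial $G$-submodule $K\omega \subseteq \wedge^2(V)$. Via $B$ itself one obtains a $G$-invariant functional $\phi \colon \wedge^2(V) \to K$, $v \wedge w \mapsto B(v,w)$. A direct computation yields $\phi(\omega) = \ell$, so since $p > 2$ and $n = 2\ell$, the composition $K\omega \hookrightarrow \wedge^2(V) \xrightarrow{\phi} K$ is an isomorphism exactly when $p \nmid n$, and is zero exactly when $p \mid n$.

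If $p \nmid n$, one immediately concludes $\wedge^2(V) = K\omega \oplus \ker\phi$; if $p \mid n$, one instead obtains a proper chain $0 \subsetneq K\omega \subsetneq \ker \phi \subsetneq \wedge^2(V)$ of $G$-submodules, with trivial top and bottom quotients. In either case the remaining factor has highest weight $\varpi_2$, as is clear from the weight decomposition of $\wedge^2(V)$. To finish, I would compare dimensions against the known values $\dim L_G(\varpi_2) = \binom{n}{2} - 1$ when $p \nmid n$ and $\binom{n}{2} - 2$ when $p \mid n$, forcing the remaining factor to be isomorphic to $L_G(\varpi_2)$. Self-duality of $\wedge^2(V)$, inherited from $V \cong V^*$, then pins down the stated socle-series structure $L_G(0) \mid L_G(\varpi_2) \mid L_G(0)$ in the case $p \mid n$.

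The main obstacle is irreducibility of the middle factor when $p \mid n$: confirming $\dim L_G(\varpi_2) = \binom{n}{2} - 2$ requires input from the representation theory of $\Sp(V)$ in good characteristic beyond the linear-algebra manipulations above, for instance the Jantzen sum formula applied to the Weyl module $V(\varpi_2)$, or a direct analysis of the radical of $\wedge^2(V)/K\omega$ via its weight space decomposition.
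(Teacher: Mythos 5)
The paper gives no proof of this lemma at all: it is imported from the literature (Seitz, 1.14 and 8.1(c); McNinch, Proposition 4.2.2 and Lemma 4.8.2), so any self-contained argument is by definition a different route, and yours is essentially the one those sources take. The parts you carry out are correct: $xy \mapsto (v \mapsto B(v,x)y + B(v,y)x)$ does land in $\mathfrak{sp}(V)$ and is a $G$-isomorphism $S^2(V) \cong \mathfrak{sp}(V)$ for $p>2$, the adjoint module is irreducible of highest weight $2\varpi_1$ in good characteristic for type $C_\ell$, and the computation $\phi(\omega) = \ell$ with $n = 2\ell$ locates the dichotomy correctly, since $p$ odd gives $p \mid n$ if and only if $p \mid \ell$.

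Two gaps remain. The first you flag honestly: the whole argument hinges on $\dim L_G(\varpi_2)$ equalling $\binom{n}{2}-1$ or $\binom{n}{2}-2$ according to whether $p \nmid n$ or $p \mid n$, equivalently on the irreducibility of $\ker\phi/(K\omega \cap \ker\phi)$, and this genuinely needs outside input (Jantzen sum formula for $V(\varpi_2)$, or Premet--Suprunenko, or the analysis in the cited sources); that is precisely the content the paper is citing rather than proving. The second you gloss over: when $p \mid n$, self-duality does \emph{not} by itself pin down the uniserial structure $L_G(0) \,|\, L_G(\varpi_2) \,|\, L_G(0)$, because the semisimple module $2\cdot L_G(0) \oplus L_G(\varpi_2)$ is also self-dual with the same composition factors. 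You must rule out a second trivial summand in the socle, for instance by noting that $\dim \operatorname{Hom}_G(K, \wedge^2(V)) \leq \dim \operatorname{Hom}_G(K, V \otimes V) = \dim \operatorname{End}_G(V) = 1$ (using $V \cong V^*$ and Schur's lemma), so the $G$-fixed space is exactly $K\omega$; combined with self-duality and the known composition factors this does force uniseriality. With those two points supplied, your sketch becomes a complete proof.
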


\begin{lemma}[{\cite[8.1 (a) -- (b)]{SeitzClassical}, \cite[Proposition 4.2.2, Lemma 4.7.3]{McNinch}}]\label{lemma:typeBDomega}
Assume $p > 2$, and let $G = \SO(V)$, where $\dim V = n$ for some $n \geq 5$. Then as $G$-modules, we have $\wedge^2(V) \cong L_G(\varpi_2)$, and $$S^2(V) \cong \begin{cases} L_G(2 \varpi_1) \oplus L_G(0) &\mbox{if } p \nmid n, \\
L_G(0) | L_G(2 \varpi_1) | L_G(0) \text{ } & \mbox{if } p \mid n.\end{cases}$$
\end{lemma}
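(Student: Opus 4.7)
The plan is to identify both $\wedge^2(V)$ and $S^2(V)$ as $G$-modules using the defining non-degenerate symmetric bilinear form $B$ of $\SO(V)$, and then reduce to well-known structural facts about the adjoint module and the trace map.

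First, $B$ induces a $G$-isomorphism $V \cong V^*$, so $V \otimes V \cong V \otimes V^* \cong \operatorname{End}(V)$. Since $p > 2$, the decomposition $V \otimes V = S^2(V) \oplus \wedge^2(V)$ corresponds under this identification to the splitting of $\operatorname{End}(V)$ into $B$-symmetric and $B$-skew endomorphisms (a short direct calculation). In particular, $\wedge^2(V)$ is carried onto $\mathfrak{so}(V) = \operatorname{Lie}(G)$, i.e., the adjoint $G$-module. For $G$ of type $B_l$ ($n = 2l+1 \geq 5$) or $D_l$ ($n = 2l \geq 6$), the highest root equals $\varpi_2$, and since $p > 2$ is good, $\mathfrak{so}(V)$ is simple as a Lie algebra and hence as a $G$-module. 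This establishes $\wedge^2(V) \cong L_G(\varpi_2)$.

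For $S^2(V)$, the identity endomorphism corresponds under the same identification to a $G$-invariant element $\beta \in S^2(V)$ spanning a trivial submodule $K\beta$. Dually, the contraction with $B$ gives a $G$-equivariant surjection $\operatorname{tr}\colon S^2(V) \to K$, and one computes $\operatorname{tr}(\beta) = n$. If $p \nmid n$, then $\operatorname{tr}(\beta) \neq 0$, so $S^2(V) = K\beta \oplus \ker(\operatorname{tr})$; if $p \mid n$, then $\beta \in \ker(\operatorname{tr})$, giving the filtration $0 \subset K\beta \subset \ker(\operatorname{tr}) \subset S^2(V)$ with trivial socle $K\beta$ and trivial top quotient. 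In either case the \emph{middle} factor contains the image of $v^+ \otimes v^+$ for a highest weight vector $v^+ \in V$, which has weight $2\varpi_1$, so $L_G(2\varpi_1)$ occurs as a composition factor.

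The main obstacle will be to verify that this middle factor is actually irreducible, with no further trivial composition factors beyond those already identified; equivalently, that $\dim L_G(2\varpi_1)$ equals $\binom{n+1}{2} - 1$ when $p \nmid n$ and $\binom{n+1}{2} - 2$ when $p \mid n$. This can be done either by appealing to L\"ubeck's dimension tables, or by analysing the Weyl module $V_G(2\varpi_1)$ directly: its character coincides with that of the traceless symmetric tensors in characteristic zero, so its composition factors consist of $L_G(2\varpi_1)$ together with precisely those trivial factors that are already forced by the $\beta$-submodule and the trace quotient. Once this irreducibility is secured, the stated decomposition of $S^2(V)$ follows at once from the filtration.
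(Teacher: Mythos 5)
The paper itself does not prove this lemma: it is quoted from Seitz and McNinch, so there is no internal argument to match yours against. Your outline is the natural direct approach, and its first half is essentially complete: identifying $\wedge^2(V)$ with the $B$-skew endomorphisms, i.e.\ with $\mathfrak{so}(V)$, and invoking the simplicity of $\mathfrak{so}(V)$ as a Lie algebra for $p>2$ and $n\geq 5$ (itself a nontrivial classical fact that you should cite rather than assert) does yield $\wedge^2(V)\cong L_G(\varpi_2)$, since any $G$-submodule of the adjoint module is an ideal and the highest weight of $\wedge^2(V)$ is the highest root $\varpi_2$.

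The $S^2(V)$ half has a genuine gap, and it sits exactly where you flag ``the main obstacle''. Producing the invariant $\beta$ and the trace map only sandwiches the interesting factor between $K\beta$ and $\ker(\operatorname{tr})$; the entire content of the lemma is that this middle factor is $L_G(2\varpi_1)$ and that no further composition factors occur. Your second proposed way of closing this --- that the composition factors of $V_G(2\varpi_1)$ are ``$L_G(2\varpi_1)$ together with precisely those trivial factors that are already forced'' --- is a restatement of the claim, not an argument: a priori the dominant weights of $S^2(V)$ are $2\varpi_1$, $\varpi_2$, $\varpi_1$ (the last only in type $B$) and $0$, so one must actually rule out $L_G(\varpi_2)$ and $L_G(\varpi_1)$ as composition factors (for instance by noting that $\varpi_2=2\varpi_1-\alpha_1$ has multiplicity $1$ in $S^2(V)$ and already occurs in $L_G(2\varpi_1)$ because $\langle 2\varpi_1,\alpha_1^\vee\rangle=2\not\equiv 0 \bmod p$, and by handling $\varepsilon_1$ via Premet's theorem on weights of $L_G(\lambda)$ for $p$ good) and then count the trivial factors from the zero weight space; the alternative appeal to L\"ubeck's tables works but merely replaces the citation the paper already gives. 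Finally, in the case $p\mid n$ the lemma asserts the socle series $L_G(0)\,|\,L_G(2\varpi_1)\,|\,L_G(0)$, i.e.\ uniseriality: your filtration exhibits the right composition factors, but you still need to show that neither trivial factor splits off, e.g.\ by checking $\dim S^2(V)^G=1$ and using the self-duality of $S^2(V)$.
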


\begin{lemma}\label{lemma:typeAomegabasis}
Let $G = \SL(V)$ and set $n = \dim V$. Let $e_1, \ldots, e_n$ be a basis of $V$ and let $e_1^*$, $\ldots$, $e_n^*$ be the corresponding dual basis of $V^*$. Then $\sum_{i = 1}^n e_i \otimes e_i^*$ spans the unique $1$-dimensional $G$-submodule of $V \otimes V^*$.
\end{lemma}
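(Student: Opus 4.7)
The plan is to identify $V \otimes V^*$ with $\operatorname{End}(V)$ via the canonical isomorphism $v \otimes f \mapsto (w \mapsto f(w) v)$, under which the $G$-action on $V \otimes V^*$ corresponds to the conjugation action of $G$ on $\operatorname{End}(V)$. Under this isomorphism, the element $\sum_{i=1}^n e_i \otimes e_i^*$ maps to $w \mapsto \sum_{i=1}^n e_i^*(w) e_i = w$, i.e.\ to the identity $\operatorname{Id}_V$. Since $g \cdot \operatorname{Id}_V = g \operatorname{Id}_V g^{-1} = \operatorname{Id}_V$ for all $g \in G$, the element $\sum_{i=1}^n e_i \otimes e_i^*$ is $G$-fixed and nonzero, so it spans a $1$-dimensional $G$-submodule isomorphic to $L_G(0)$.

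For uniqueness, I would invoke Lemma \ref{lemma:typeAomega}. When $p \nmid n$, we have $V \otimes V^* \cong L_G(\varpi_1 + \varpi_{n-1}) \oplus L_G(0)$, so the summand $L_G(0)$ is the unique $1$-dimensional $G$-submodule. When $p \mid n$, we have $V \otimes V^* = L_G(0)\,|\,L_G(\varpi_1 + \varpi_{n-1})\,|\,L_G(0)$, and by the definition of the notation used in Section \ref{section:notation}, the socle is the bottom $L_G(0)$, which is again the unique $1$-dimensional $G$-submodule. In either case, the $G$-fixed line spanned by $\sum_{i=1}^n e_i \otimes e_i^*$ must coincide with this unique submodule.

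As a (perhaps more self-contained) alternative to the conjugation argument, one can verify that $\sum_{i=1}^n e_i \otimes e_i^*$ is independent of the choice of basis: if $f_j = \sum_i a_{ij} e_i$ is another basis of $V$ with dual basis $f_j^* = \sum_k b_{jk} e_k^*$, then $B = A^{-1}$, and a short computation gives $\sum_j f_j \otimes f_j^* = \sum_{i,k}\bigl(\sum_j a_{ij} b_{jk}\bigr) e_i \otimes e_k^* = \sum_i e_i \otimes e_i^*$. Since every $g \in \SL(V)$ sends the basis $e_1, \ldots, e_n$ to another basis of $V$, basis independence immediately implies $G$-invariance.

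There is no real obstacle here: the content of the statement is the canonical identification of the $G$-fixed line in $V \otimes V^* \cong \operatorname{End}(V)$ with the scalar matrices, and the only care needed is to cite Lemma \ref{lemma:typeAomega} to cover both the semisimple case $p \nmid n$ and the non-split case $p \mid n$ uniformly when asserting uniqueness of the $1$-dimensional submodule.
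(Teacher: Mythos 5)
Your proof is correct and follows essentially the same route as the paper: identify $V \otimes V^*$ with $\operatorname{End}(V)$, note that $\sum_i e_i \otimes e_i^*$ corresponds to the identity map, and conclude it is fixed under the conjugation action. The only divergence is the uniqueness step, which the paper dispatches as a direct linear-algebra exercise (a one-dimensional submodule of $\operatorname{End}(V)$ is spanned by a matrix commuting with all of $\SL(V)$, hence scalar, since $\SL(V)$ admits no nontrivial characters), whereas you invoke Lemma~\ref{lemma:typeAomega}; both are valid, and your reading of the socle in the $p \mid n$ case is consistent with the paper's filtration convention.
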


\begin{proof}It is well known that there is a canonical isomorphism $V \otimes V^* \rightarrow \operatorname{End}(V)$ of $G$-modules. Furthermore, this isomorphism maps $\sum_{i = 1}^n e_i \otimes e_i^*$ to the identity map $V \rightarrow V$. A straightforward exercise in linear algebra shows that $\operatorname{End}(V)$ has a unique $1$-dimensional $G$-submodule spanned by the identity map, so the lemma follows.\end{proof}

\section{Jordan block sizes in tensor squares}\label{section:tensordecomp}

In this section, we briefly discuss the Jordan decomposition of tensor products, exterior squares, and symmetric squares of unipotent linear maps. In positive characteristic, it is convenient to describe results on this topic in terms of the representation theory of a cyclic $p$-group. 

Let $u$ be a generator of a cyclic $p$-group of order $q$. We have an obvious correspondence between unipotent linear maps of order at most $q$ and $K[u]$-modules, where the decomposition of a $K[u]$-module into indecomposable summands corresponds to the Jordan normal form. There exist exactly $q$ indecomposable $K[u]$-modules up to isomorphism, which we label by $V_1$, $\ldots$, $V_q$, where $\dim V_i = i$ and $u$ acts on $V_i$ as a full $i \times i$ Jordan block. 

Taking tensor products of $K[u]$-modules is an additive functor. Furthermore, $$\wedge^2(W \oplus W') \cong \wedge^2(W) \oplus (W \otimes W') \oplus \wedge^2(W')$$ and $$S^2(W \oplus W') \cong S^2(W) \oplus (W \otimes W') \oplus S^2(W')$$ for all $K[u]$-modules $W$ and $W'$. Thus for any $K[u]$-module $V$, the question of decomposing $V \otimes V$, $\wedge^2(V)$, and $S^2(V)$ into indecomposable summands is easily reduced to the problem of decomposing $V_m \otimes V_n$, $\wedge^2(V_n)$, and $S^2(V_n)$ for integers $m,n > 0$.

There is a large amount of literature concerning the decomposition of $V_m \otimes V_n$ into a direct sum of indecomposables, for example \cite{Srinivasan}, \cite{Ralley}, \cite{McFall}, \cite{Renaud}, \cite{Norman}, \cite{Norman2}, \cite{Hou}, and \cite{Barry}. In positive characteristic, we do not have an easy explicit decomposition of $V_m \otimes V_n$ as in characteristic $0$, but there are various recursive descriptions in terms of $m$ and $n$. For example, if $m,n < p^\alpha$ and $m+n > p^{\alpha}$, then $V_m \otimes V_n \cong (m+n-p^\alpha) V_{p^\alpha} \oplus (V_{p^\alpha-m} \otimes V_{p^{\alpha}-n}).$ 

One convenient recursive description of $V_m \otimes V_n$ is given in \cite[Theorem 1]{Barry}, and the same paper also describes $\wedge^2(V_n)$ and $S^2(V_n)$ when $p > 2$ \cite[Theorem 2]{Barry}. In characteristic $p = 2$, the decomposition of $\wedge^2(V_n)$ and $S^2(V_n)$ is found in \cite[Theorem 2]{GowLaffey} and \cite[Theorem 1.1, Theorem 1.2]{HimstedtSymonds}.

The following theorem was first proven in \cite{GlasbyPraegerXia}. We present a different proof using induction and restriction of $K[u]$-modules.

\begin{lause}[{\cite[Theorem 5]{GlasbyPraegerXia}}]\label{thm:GPX}
Let $\alpha \geq 0$ and $0 < m,n \leq q/p^{\alpha}$. Suppose that $V_m \otimes V_n \cong V_{d_1} \oplus \cdots \oplus V_{d_t}$. Then $V_{p^{\alpha}m} \otimes V_{p^{\alpha}n} \cong p^{\alpha} \cdot (V_{p^{\alpha}d_1} \oplus \cdots \oplus V_{p^{\alpha}d_t})$.
\end{lause}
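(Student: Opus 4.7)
The plan is to realize the $p^{\alpha}$-scaling in the statement by passing to the cyclic subgroup $H = \langle u^{p^{\alpha}}\rangle$ of index $p^{\alpha}$ in $G = \langle u\rangle$, and performing induction and restriction along $H \leq G$. Write $V_k^G$ and $V_k^H$ for the $k$-dimensional indecomposable modules over $K[G]$ and $K[H]$ respectively, so that $|H| = q/p^{\alpha}$. The entire argument rests on two module-theoretic identities, each valid for $1 \leq k \leq q/p^{\alpha}$:
\begin{align*}
\text{(A)}\quad & \operatorname{Res}_H^G V_{p^{\alpha} k}^G \cong p^{\alpha} \cdot V_k^H, \\
\text{(B)}\quad & \operatorname{Ind}_H^G V_k^H \cong V_{p^{\alpha} k}^G.
\end{align*}

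Both identities are short consequences of the characteristic-$p$ identity $(u-1)^{p^{\alpha}} = u^{p^{\alpha}} - 1$. For (A), on $V_{p^{\alpha} k}^G$ this gives $\dim \operatorname{Ker}(u^{p^{\alpha}}-1)^j = \dim \operatorname{Ker}(u-1)^{p^{\alpha} j} = \min(p^{\alpha} j,\, p^{\alpha} k)$ for all $j \geq 0$; substituting into Lemma~\ref{jordanblockformula} applied to $u^{p^{\alpha}}$ yields exactly $p^{\alpha}$ Jordan blocks of size $k$ and no others. For (B), present $V_k^H$ as the quotient $K[H]/((u^{p^{\alpha}}-1)^k)$ and compute
$$\operatorname{Ind}_H^G V_k^H \cong K[G] \otimes_{K[H]} K[H]/((u^{p^{\alpha}}-1)^k) \cong K[G]/((u-1)^{p^{\alpha} k}) \cong V_{p^{\alpha} k}^G,$$
using $(u^{p^{\alpha}}-1)^k = (u-1)^{p^{\alpha} k}$ in $K[G]$.

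With (A) and (B) established, the theorem follows from the projection formula $\operatorname{Ind}_H^G(M) \otimes N \cong \operatorname{Ind}_H^G(M \otimes \operatorname{Res}_H^G N)$: writing $V_{p^{\alpha} m}^G = \operatorname{Ind}_H^G V_m^H$ via (B) and applying (A) to $V_{p^{\alpha} n}^G$, one obtains
\begin{align*}
V_{p^{\alpha} m}^G \otimes V_{p^{\alpha} n}^G
&\cong \operatorname{Ind}_H^G( V_m^H \otimes \operatorname{Res}_H^G V_{p^{\alpha} n}^G ) \\
&\cong p^{\alpha} \cdot \operatorname{Ind}_H^G( V_m^H \otimes V_n^H ) \\
&\cong p^{\alpha} \cdot \bigoplus_{i=1}^t V_{p^{\alpha} d_i}^G.
\end{align*}
In the last step one uses that the Jordan decomposition of a tensor product of two Jordan blocks depends only on the block sizes and the characteristic, so the $K[H]$-module $V_m^H \otimes V_n^H$ decomposes into the same parts $d_i$ as $V_m^G \otimes V_n^G$; in particular each $d_i \leq q/p^{\alpha}$ (since $V_{d_i}^H$ must genuinely exist as an indecomposable $K[H]$-module), which is precisely the hypothesis needed to apply (B) to every summand. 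The only mildly technical ingredient is the restriction identity (A); the subtle point to double-check is the intrinsic nature of the decomposition invoked in the last step, and everything else is formal Frobenius reciprocity.
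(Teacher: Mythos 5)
Your proof is correct and takes essentially the same route as the paper: induction and restriction along the index-$p^{\alpha}$ subgroup $H = \langle u^{p^{\alpha}} \rangle$, combined with the projection formula $\operatorname{Ind}_H^G(M) \otimes N \cong \operatorname{Ind}_H^G(M \otimes \operatorname{Res}_H^G N)$. The only difference is that you spell out the verifications of the induction and restriction identities (A) and (B), which the paper states without proof.
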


\begin{proof}
Set $G = \langle u \rangle$ and let $H = \langle u^{p^{\alpha}} \rangle$. Note that \begin{equation}\label{eq:multiplesalphatensor}\operatorname{Ind}_H^G (V_d) \cong V_{p^{\alpha}d}\end{equation} for all $0 < d \leq p^{\alpha}$. Thus $V_{p^{\alpha}m} \otimes V_{p^{\alpha}n} \cong \operatorname{Ind}_H^G (V_m)\otimes V_{p^{\alpha}n} \cong \operatorname{Ind}_H^G(V_m \otimes \operatorname{Res}_H^G(V_{p^{\alpha}n}))$ by \cite[Lemma 5 (5), p. 57]{Alperin}. Since $\operatorname{Res}_H^G(V_{p^{\alpha}n}) \cong p^{\alpha} \cdot V_n$, we conclude that $V_{p^{\alpha}m} \otimes V_{p^{\alpha}n} \cong p^{\alpha} \cdot \operatorname{Ind}_H^G(V_m \otimes V_n)$. Now the theorem follows from~\eqref{eq:multiplesalphatensor}.\end{proof}

We finish this section with two lemmas which describe the smallest Jordan block size in the tensor square of a unipotent matrix.

\begin{lemma}\label{lemma:smallestonsingle}
Let $0 < n \leq q$ and set $\alpha = \nu_p(n)$. Then the smallest Jordan block size in $V_n \otimes V_n$ is $p^{\alpha}$, which occurs with multiplicity $p^{\alpha}$.
\end{lemma}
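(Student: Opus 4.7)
The plan is to reduce to the case $\gcd(n, p) = 1$ via Theorem \ref{thm:GPX}, and then to identify $V_n \otimes V_n$ with $\operatorname{End}(V_n)$ under the conjugation action and carry out the computation there. Write $n = p^{\alpha} m$ with $\gcd(m, p) = 1$. Applying Theorem \ref{thm:GPX} with both factors equal to $m$, the decomposition of $V_n \otimes V_n$ is obtained from that of $V_m \otimes V_m$ by replacing each indecomposable summand $V_d$ with $p^{\alpha}$ copies of $V_{p^{\alpha} d}$. It therefore suffices to prove that when $\gcd(m, p) = 1$, the smallest Jordan block of $u$ on $V_m \otimes V_m$ is $V_1$ with multiplicity one, i.e.\ that $r_1(u) = 1$ on $V_m \otimes V_m$.

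Since a single unipotent Jordan block is conjugate to its inverse-transpose, $V_m \cong V_m^{*}$ as $K[u]$-modules, and consequently $V_m \otimes V_m \cong \operatorname{End}(V_m)$ with $u$ acting by conjugation $\phi \mapsto u \phi u^{-1}$. Setting $N = u - 1$, a direct computation yields $(\operatorname{Ad}_u - 1)(\phi) = [N, \phi] u^{-1}$; since $u^{-1} \in K[N]$ commutes with $N$, the subspace $[N, \mathfrak{gl}(V_m)]$ is preserved by right multiplication by $u^{-1}$, and therefore $\operatorname{im}(\operatorname{Ad}_u - 1)$ coincides with the image of the Lie-algebraic operator $\operatorname{ad}_N$, namely $[N, \mathfrak{gl}(V_m)]$.

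I then count Jordan blocks of size one via the standard identity $r_1(u) = \dim M^u - \dim(M^u \cap (u-1)M)$, which is a direct consequence of Lemma \ref{jordanblockformula}. For $M = \operatorname{End}(V_m)$, regularity of $u$ gives $M^u = K[N]$, of dimension $m$. To handle the intersection I exploit that the trace form on $\mathfrak{gl}(V_m)$ is nondegenerate and that $\operatorname{ad}_N$ is skew-adjoint with respect to it, so $[N, \mathfrak{gl}(V_m)] = K[N]^{\perp}$. Thus $M^u \cap \operatorname{im}(\operatorname{Ad}_u - 1) = K[N] \cap K[N]^{\perp}$ is the radical of the trace form restricted to $K[N]$. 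Since $\operatorname{tr}(N^i N^j) = \operatorname{tr}(N^{i+j})$ vanishes unless $i = j = 0$ (where it equals $m$, nonzero in $K$ by $\gcd(m,p) = 1$), this radical has basis $N, N^2, \ldots, N^{m-1}$ and dimension $m - 1$. Hence $r_1(u) = m - (m-1) = 1$, as required. The main technical step is the identification $\operatorname{im}(\operatorname{Ad}_u - 1) = \operatorname{im}(\operatorname{ad}_N)$, which is what makes the trace-form duality available; once that is in hand, the rest is a short dimension count.
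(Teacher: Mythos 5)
Your proof is correct, and its first step --- the reduction to the coprime case via Theorem \ref{thm:GPX} --- is exactly the paper's first step. Where you diverge is in the remaining claim that $V_1$ occurs with multiplicity one in $V_m \otimes V_m$ when $p \nmid m$: the paper simply cites \cite[Theorem 2.1]{BensonCarlson}, which says that the trivial module is a direct summand of $M \otimes M^*$, necessarily with multiplicity one, precisely when $p \nmid \dim M$; you instead prove this instance of it from scratch. Your computation --- identifying $V_m \otimes V_m$ with $\operatorname{End}(V_m)$ under conjugation, showing $\operatorname{im}(\operatorname{Ad}_u - 1) = [N, \mathfrak{gl}(V_m)] = K[N]^{\perp}$ via skew-adjointness of $\operatorname{ad}_N$ for the trace form, and then computing the radical of the trace form restricted to $K[N]$ --- is correct in every step; in particular the count $r_1(u) = \dim M^u - \dim\bigl(M^u \cap (u-1)M\bigr)$ does follow from Lemma \ref{jordanblockformula}, since $X = u-1$ maps $\operatorname{Ker} X^2$ onto $\operatorname{Ker} X \cap XM$ with kernel $\operatorname{Ker} X$. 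What the citation buys is brevity; what your argument buys is a self-contained proof that makes visible exactly where the hypothesis $p \nmid m$ enters, namely in the nonvanishing of $\operatorname{tr}(1) = m$ in $K$.
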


\begin{proof}Since $p$ does not divide $n/p^{\alpha}$, it follows from \cite[Theorem 2.1]{BensonCarlson} that $V_1$ is a direct summand of $V_{n/p^\alpha} \otimes V_{n/p^\alpha}$, occurring with multiplicity one. Now the lemma follows by applying Theorem \ref{thm:GPX} to $V_{n/p^\alpha} \otimes V_{n/p^\alpha}$.\end{proof}

\begin{lemma}\label{lemma:smallestontsquare}
Let $V = V_{d_1} \oplus \cdots \oplus V_{d_t}$, where $t \geq 1$ and $0 < d_i \leq q$ for all $i$. Let $\alpha = \nu_p(\gcd(d_1, \ldots, d_t))$. Then the smallest Jordan block size occurring in $V \otimes V$ is $p^\alpha$, which occurs with multiplicity at least $p^{\alpha}$.
\end{lemma}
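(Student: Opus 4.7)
The plan is to decompose $V \otimes V$ as $\bigoplus_{i,j} V_{d_i} \otimes V_{d_j}$ and analyze each summand separately, using Theorem \ref{thm:GPX} for the lower bound on block sizes and Lemma \ref{lemma:smallestonsingle} to produce the required multiplicity. Note that since $\alpha = \nu_p(\gcd(d_1, \ldots, d_t)) = \min_i \nu_p(d_i)$, there exists some index $i_0$ with $\nu_p(d_{i_0}) = \alpha$, which will be the source of the $p^\alpha$ copies of $V_{p^\alpha}$.

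First I would show that every Jordan block appearing in $V \otimes V$ has size $\geq p^\alpha$. For the diagonal summands $V_{d_i} \otimes V_{d_i}$, this follows immediately from Lemma \ref{lemma:smallestonsingle}, since the smallest block has size $p^{\nu_p(d_i)} \geq p^\alpha$. For an off-diagonal summand $V_{d_i} \otimes V_{d_j}$ with $i \neq j$, set $\beta = \min(\nu_p(d_i), \nu_p(d_j)) \geq \alpha$ and write $d_i = p^\beta m$, $d_j = p^\beta n$ with $0 < m,n \leq q/p^\beta$. Theorem \ref{thm:GPX} then gives $V_{d_i} \otimes V_{d_j} \cong p^\beta \cdot (V_{p^\beta d_1'} \oplus \cdots \oplus V_{p^\beta d_s'})$ for some decomposition $V_m \otimes V_n \cong V_{d_1'} \oplus \cdots \oplus V_{d_s'}$, so every block size is a positive multiple of $p^\beta$, hence $\geq p^\beta \geq p^\alpha$.

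Next I would establish the multiplicity claim by restricting attention to the single summand $V_{d_{i_0}} \otimes V_{d_{i_0}}$ with $\nu_p(d_{i_0}) = \alpha$. Lemma \ref{lemma:smallestonsingle} applied to this summand directly produces $p^\alpha$ copies of $V_{p^\alpha}$. Since this is just one of the summands of $V \otimes V$, the total multiplicity of $V_{p^\alpha}$ in $V \otimes V$ is at least $p^\alpha$, completing the proof.

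No step seems particularly thorny: the only subtle point is checking the hypotheses of Theorem \ref{thm:GPX} (namely that $m, n \leq q/p^\beta$), which reduces to the given assumption $d_i \leq q$. The existence of an index with $\nu_p(d_{i_0}) = \alpha$ is immediate from the identity $\nu_p(\gcd(d_1, \ldots, d_t)) = \min_i \nu_p(d_i)$.
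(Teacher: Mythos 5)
Your proposal is correct and follows essentially the same route as the paper: decompose $V \otimes V$ into the summands $V_{d_i} \otimes V_{d_j}$, use Theorem \ref{thm:GPX} to show every block size is a multiple of (hence at least) $p^\alpha$, and apply Lemma \ref{lemma:smallestonsingle} to a summand $V_{d_{i_0}} \otimes V_{d_{i_0}}$ with $\nu_p(d_{i_0}) = \alpha$ to get the multiplicity. The only cosmetic difference is that you treat diagonal and off-diagonal summands separately, while the paper applies Theorem \ref{thm:GPX} uniformly using the fact that $p^\alpha$ divides every $d_i$.
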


\begin{proof}We have $V \otimes V \cong \bigoplus_{1 \leq i,j \leq t} V_{d_i} \otimes V_{d_j}$ as $K[u]$-modules. Since $p^\alpha$ divides $d_i$ and $d_j$ for all $1 \leq i,j \leq t$, by Theorem \ref{thm:GPX} every Jordan block size in $V_{d_i} \otimes V_{d_j}$ is a multiple of $p^{\alpha}$, so in particular at least $p^{\alpha}$. Furthermore, there exists some $i$ such that $\nu_p(d_i) = \alpha$, so the claim follows from Lemma \ref{lemma:smallestonsingle}.\end{proof}

\section{Action of a regular unipotent element of $\SL(V)$ on $V \otimes V^*$}\label{section:calculations}

For this section, we fix a basis $e_1$, $\ldots$, $e_n$ of $V$. Let $e_1^*$, $\ldots$, $e_n^*$ be the corresponding dual basis of $V^*$, so $e_i^*(e_j) = \delta_{i,j}$ for all $1 \leq i,j \leq n$. For convenience of notation, we set $e_i = 0$ and $e_i^* = 0$ for all $i \leq 0$ and $i > n$. Throughout this section, we denote by $u$ the unipotent linear map which is a single $n \times n$ Jordan block with respect to the basis $(e_i)$, that is, $$ue_i = e_i + e_{i-1}$$ for all $1 \leq i \leq n$. We denote by $X$ the element $u-1$ of $K[u]$. The main purpose of this section is to establish various formulae for the action of powers of $X$ on the $\SL(V)$-module $V \otimes V^*$. We begin with the following lemma.

\begin{lemma}\label{lemma:Xpower_basic}Let $1 \leq i \leq n$. Then 
\begin{enumerate}[\normalfont (i)]
\item $X^k e_i = e_{i-k}$ for all $k \geq 1$,
\item $X^k \cdot e_i^* = \sum_{i+k \leq j \leq n} (-1)^{i+j} \binom{j-i-1}{k-1} e_j^*$ for all $k \geq 1$ and $1 \leq i \leq n$. 
\end{enumerate}
\end{lemma}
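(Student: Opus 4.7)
For part (i), since $u e_i = e_i + e_{i-1}$ the definition gives $X e_i = e_{i-1}$ directly, and the formula then follows by a trivial induction on $k$ (using the convention $e_j = 0$ for $j \leq 0$).

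For part (ii) I would argue by induction on $k$. The base case amounts to working out $X$ on $V^*$: using the standard convention $(g \cdot \phi)(v) = \phi(g^{-1} v)$, I first invert $u = 1 + N$ (where $N e_i = e_{i-1}$) via the geometric series $u^{-1} = \sum_{m \geq 0}(-1)^m N^m$, so that $(u \cdot e_i^*)(e_j) = (-1)^{j-i}$ for $j \geq i$ and zero otherwise; subtracting the identity then yields the $k=1$ case. For the inductive step I would apply $X$ termwise to the stated expression for $X^k e_i^*$ using the $k=1$ case just established, swap the order of summation, and collapse the resulting inner sum $\sum_{j=i+k}^{l-1}\binom{j-i-1}{k-1}$ using the hockey stick identity $\sum_{m=k-1}^{N}\binom{m}{k-1} = \binom{N+1}{k}$ to obtain $\binom{l-i-1}{k}$, which is exactly the binomial coefficient appearing in the formula for $k+1$. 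The sign $(-1)^{i+l}$ falls out automatically once the two factors of $(-1)^j$ in the double sum cancel.

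The only real obstacle here is bookkeeping — tracking signs, verifying that the correct index ranges drop out (in particular that any terms with $l > n$ vanish, which they do because the convention $e_j^* = 0$ for $j > n$ is built into both sides), and spotting that the relevant identity is the hockey stick rather than a Vandermonde or Pascal variant. None of this is substantive, and the lemma should follow cleanly.
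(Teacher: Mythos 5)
Your proposal is correct and follows essentially the same route as the paper's proof: the base case via computing $u^{-1}$ on the $e_i$ (you use the finite geometric series for $(1+N)^{-1}$ where the paper verifies the same formula by induction on $i$, a negligible difference), and the inductive step by applying the $k=1$ case termwise, exchanging summation order, and collapsing the inner sum with the hockey-stick identity. No gaps.
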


\begin{proof}Statement (i) is obvious since $Xe_i = e_{i-1}$. For (ii), first note that $u^{-1} e_i = \sum_{1 \leq j \leq i} (-1)^{i+j} e_j$ for all $1 \leq i \leq n$, which is easily verified by induction on $i$. Consequently, \begin{equation}\label{eq:u_on_dual} u \cdot e_i^* = \sum_{i \leq j \leq n} (-1)^{i+j} e_j^* \end{equation} for all $1 \leq i \leq n$. We now prove (ii) by induction on $k$. The case $k = 1$ is given by~\eqref{eq:u_on_dual}. Suppose that (ii) holds for some $k \geq 1$. Then \begin{align} X^{k+1} \cdot e_i^* &= \sum_{i+k \leq j \leq n} (-1)^{i+j} \binom{j-i-1}{k-1} Xe_j^* \nonumber \\ &= \sum_{i+k \leq j \leq n} \sum_{j+1 \leq j' \leq n} (-1)^{i+j} \binom{j-i-1}{k-1} (-1)^{j+j'} e_{j'}^* \label{eq:combinestep}. \end{align} Collecting the terms in~\eqref{eq:combinestep}, we see that for $1 \leq j' \leq n$, the coefficient of $e_{j'}^*$ in $X^{k+1} \cdot e_i^*$ is zero if $j' < i+k+1$. When $j' \geq i+k+1$, the coefficient is equal to \begin{align} &(-1)^{i+j'} \sum_{i+k \leq j \leq j'-1} \binom{j-i-1}{k-1} \nonumber \\ =\ &(-1)^{i+j'} \sum_{0 \leq j \leq j'-i-k-1} \binom{j+k-1}{k-1} \nonumber \\ =\ & (-1)^{i+j'} \binom{j'-i-1}{j'-i-k-1} \label{eq:hockeystick} \\ =\ & (-1)^{i+j'} \binom{j'-i-1}{k} \label{eq:laststepsX} \end{align} where~\eqref{eq:hockeystick} is given by a standard combinatorial identity (``hockey-stick identity''), see e.g. \cite[1.2.6, (10)]{Knuth}. We conclude that $$X^{k+1} \cdot e_i^* = \sum_{i+k+1 \leq j \leq n} (-1)^{i+j} \binom{j-i-1}{k} e_j^*,$$ which completes the proof of the lemma.\end{proof}

\begin{lemma}\label{lemma:tensorpowerX}
Let $V_1$ and $V_2$ be $K[u]$-modules. Then $$X^k \cdot (v \otimes w) = \sum_{\substack{0 \leq t \leq k \\ 0 \leq s \leq t}} \binom{k}{t} \binom{t}{s} X^t v \otimes X^{k-s} w$$ for all $v \in V_1$ and $w \in V_2$ and $k \geq 1$.
\end{lemma}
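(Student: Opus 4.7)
The plan is to argue by induction on $k$, with the base case $k = 1$ obtained by a direct calculation. Since $u$ acts diagonally on $V_1 \otimes V_2$, we have
$$X \cdot (v \otimes w) = (u \otimes u)(v \otimes w) - v \otimes w = Xv \otimes w + v \otimes Xw + Xv \otimes Xw,$$
and this matches the right-hand side of the claim for $k = 1$, where the three terms correspond to $(t,s) = (1,1), (0,0), (1,0)$ respectively, each with coefficient $1$.

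For the inductive step, assume the formula for some $k \geq 1$ and apply $X$ once more. By the $k=1$ identity above, each summand $X^t v \otimes X^{k-s} w$ on the right-hand side produces three contributions, namely $X^{t+1}v \otimes X^{k-s}w$, $X^t v \otimes X^{k-s+1} w$, and $X^{t+1} v \otimes X^{k-s+1} w$. Collecting terms and reindexing so that the target summand reads $X^{t'} v \otimes X^{(k+1)-s'} w$, the three contributions come from the old index pairs $(t,s) = (t'-1, s'-1)$, $(t', s')$, $(t'-1, s')$, so the resulting coefficient is
$$\binom{k}{t'-1}\binom{t'-1}{s'-1} + \binom{k}{t'}\binom{t'}{s'} + \binom{k}{t'-1}\binom{t'-1}{s'}.$$

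It then remains to verify that this sum equals $\binom{k+1}{t'}\binom{t'}{s'}$, which is a routine double application of Pascal's rule: combining the first and third terms via $\binom{t'-1}{s'-1} + \binom{t'-1}{s'} = \binom{t'}{s'}$ produces $\binom{k}{t'-1}\binom{t'}{s'}$, and adding the middle term yields $\bigl(\binom{k}{t'-1} + \binom{k}{t'}\bigr)\binom{t'}{s'} = \binom{k+1}{t'}\binom{t'}{s'}$, as required. The only mildly delicate point is the bookkeeping at the boundary indices such as $t' = 0$, $t' = k+1$, $s' = 0$, or $s' = t'$, but these are handled uniformly by the convention $\binom{a}{b} = 0$ for $a < b$ adopted in Section \ref{section:notation}. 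I do not expect any real obstacle here: the whole argument is bilinear in $v$ and $w$, so it reduces entirely to the binomial identity just displayed.
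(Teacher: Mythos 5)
Your proof is correct and follows essentially the same route as the paper's: induction on $k$ with the base case $X\cdot(v\otimes w)=Xv\otimes Xw+Xv\otimes w+v\otimes Xw$, followed by collecting the three contributions in the inductive step and verifying the coefficient identity via Pascal's rule (the paper phrases this through the coefficients $\lambda_{t,s}^{(k)}=\binom{k}{t}\binom{t}{k-s}$ and their recurrence, but the computation is the same). Your handling of the boundary indices via the convention $\binom{a}{b}=0$ for $b>a$ is also fine.
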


\begin{proof}We proceed by induction on $k$. We have $u\cdot (v \otimes w) = uv \otimes uw = (Xv + v) \otimes (Xw + w),$ so $X \cdot (v \otimes w) = Xv \otimes Xw + Xv \otimes w + v \otimes Xw$ and thus the lemma holds when $k = 1$. Suppose that the lemma holds for some $k \geq 1$. Then \begin{equation}\label{eq:tensor_indk} X^k \cdot (v \otimes w) = \sum_{0 \leq t,s \leq k} \lambda_{t,s}^{(k)} X^t v \otimes X^s w,\end{equation} where $\lambda_{t,s}^{(k)} = \binom{k}{t} \binom{t}{k-s}$ for all $0 \leq t,s \leq k$. Applying $X$ to both sides of~\eqref{eq:tensor_indk}, we find that \begin{align*}X^{k+1} \cdot (v \otimes w) &= \sum_{0 \leq t,s \leq k} \lambda_{t,s}^{(k)} (X^{t+1}v \otimes X^{s+1}w + X^{t+1}v \otimes X^s w + X^t v \otimes X^{s+1}w) \\ &=\sum_{0 \leq t,s \leq k+1} \lambda_{t,s}^{(k+1)} X^tv \otimes X^sw\end{align*} where $\lambda^{(k+1)}_{0,0} = 0$, and \begin{align*} \lambda^{(k+1)}_{0,s} &= \lambda^{(k)}_{0,s-1}, \text{ for all } 0 < s \leq k+1, \\ \lambda^{(k+1)}_{t,0} &= \lambda^{(k)}_{t-1,0}, \text{ for all } 0 < t \leq k+1, \\ \lambda^{(k+1)}_{t,s} &= \lambda^{(k)}_{t-1,s-1} + \lambda^{(k)}_{t-1,s} + \lambda^{(k)}_{t,s-1}, \text{ for all } 0 < t,s \leq k+1.\end{align*} Now a straightforward calculation shows that $\lambda_{t,s}^{(k+1)} = \binom{k+1}{t} \binom{t}{k+1-s}$ for all $0 \leq t,s \leq k+1$, which proves the lemma.\end{proof}

For the rest of this section, set $\alpha = \nu_p(n)$. For all $0 \leq \beta \leq \alpha$, let $k_\beta \in \Z$ be such that $n = p^{\beta} k_{\beta}$, and define \begin{equation}\label{eq:deltabeta}\delta_\beta = \sum_{1 \leq i \leq p^{\beta}} \sum_{0 \leq j \leq k_{\beta}-1} (-1)^{i+1} (e_{jp^\beta + i} \otimes e_{jp^\beta + 1}^*).\end{equation} Note that $\delta_0 = \sum_{i = 1}^n e_i \otimes e_i^*$ spans the unique $1$-dimensional $\SL(V)$-submodule of $V \otimes V^*$ by Lemma \ref{lemma:typeAomegabasis}. 

The following proposition was suggested to us by calculations done for small $n$, and it is key in the proof of our main result.

\begin{prop}\label{prop:powerXondelta}
Let $1 \leq \beta \leq \alpha$. Then $X^{(p-1)p^{\beta - 1}} \cdot \delta_{\beta} = \delta_{\beta - 1}$.
\end{prop}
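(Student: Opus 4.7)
The plan is to verify the identity $X^{(p-1)p^{\beta-1}} \delta_\beta = \delta_{\beta-1}$ by expanding both sides in the basis $\{e_c \otimes e_d^*\}$ of $V \otimes V^*$ and comparing coefficients. Set $k = (p-1)p^{\beta-1}$. For each term $(-1)^{a-b}\, e_a \otimes e_b^*$ appearing in $\delta_\beta$, I would apply Lemma \ref{lemma:tensorpowerX} to expand $X^k(e_a \otimes e_b^*)$ and then use Lemma \ref{lemma:Xpower_basic} to evaluate the two factors.

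The first step is a substantial simplification of the double sum in Lemma \ref{lemma:tensorpowerX} via Lucas' theorem. Since $k$ has a single nonzero base-$p$ digit $p-1$ at position $\beta-1$, $\binom{k}{t}$ vanishes modulo $p$ unless $t = jp^{\beta-1}$ for some $0 \leq j \leq p-1$; for such $t$, Lemma \ref{lemma:pminus1modp} gives $\binom{k}{t} \equiv (-1)^j \pmod p$, and another application of Lucas forces $s = lp^{\beta-1}$ with $0 \leq l \leq j$ and then $\binom{t}{s} \equiv \binom{j}{l} \pmod p$. This reduces Lemma \ref{lemma:tensorpowerX} to
\[
X^k (v \otimes w) = \sum_{0 \leq l \leq j \leq p-1} (-1)^j \binom{j}{l}\, X^{jp^{\beta-1}} v \otimes X^{(p-1-l)p^{\beta-1}} w.
\]
Applied to $v = e_a$ and $w = e_b^*$, Lemma \ref{lemma:Xpower_basic}(i) forces $a = c + jp^{\beta-1}$ in order for $e_c$ to appear, and Lemma \ref{lemma:Xpower_basic}(ii) expresses the dual factor as a sum over $d$. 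A third application of Lucas reduces the relevant binomial coefficient to $\binom{(m-1) \bmod p}{p-2-l} \pmod p$ when $d - b = m p^{\beta-1}$, and shows it vanishes otherwise. Combined with $b \equiv 1 \pmod{p^\beta}$, this forces $d \equiv 1 \pmod{p^{\beta-1}}$, matching the support of $\delta_{\beta-1}$.

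The final step is to check that, for each basis vector $e_c \otimes e_d^*$ with $d \equiv 1 \pmod{p^{\beta-1}}$, the sum of contributions from admissible quadruples $(a, b, j, l)$ matches the coefficient of $e_c \otimes e_d^*$ in $\delta_{\beta-1}$. Because $b \equiv 1 \pmod{p^\beta}$ pins $m = (d-b)/p^{\beta-1}$ to a fixed residue $m_0 \pmod p$, the Lucas-reduced binomial $\binom{(m-1) \bmod p}{p-2-l}$ is constant across admissible $b$, and after tracking signs (using $(-1)^{(p-1)p^{\beta-1}} = 1$ in $K$) each contribution collapses to $(-1)^{c-d} \binom{j}{l} \binom{(m_0-1) \bmod p}{p-2-l}$, plus a simple contribution from the exceptional case $l = p-1$ when $d \equiv 1 \pmod{p^\beta}$. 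The hard part will be the combinatorial bookkeeping: the support constraint $b \leq a \leq b + p^\beta - 1$ couples the choice of $j$ to the admissible values of $b$, so I will need to carefully enumerate the admissible $(j, l)$ as a function of the residues of $c - d$ modulo $p^{\beta-1}$ and $p^\beta$, and then apply standard identities such as Vandermonde's identity, the hockey-stick identity, and the vanishing of $\binom{p}{r} \pmod p$ for $1 \leq r \leq p-1$, to see that the total sum equals $1$ when $(c, d)$ is in the support of $\delta_{\beta-1}$ and $0$ otherwise.
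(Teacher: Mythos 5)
Your setup is sound: the three applications of Lucas' theorem are all correct, and the reduced expansion
\[
X^{(p-1)p^{\beta-1}} (v \otimes w) \;=\; \sum_{0 \leq l \leq j \leq p-1} (-1)^j \binom{j}{l}\, X^{jp^{\beta-1}} v \otimes X^{(p-1-l)p^{\beta-1}} w
\]
together with the reduction of $\binom{d-b-1}{(p-1-l)p^{\beta-1}-1}$ to $\binom{(m-1) \bmod p}{\,p-2-l}$ when $d-b = mp^{\beta-1}$, and the observation that the sign collapses to $(-1)^{c-d}$, is a legitimate alternative organization of the same computation the paper performs in Lemma \ref{lemma:powerXbeta_formula}. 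However, the proposal stops exactly where the proof begins. The final paragraph is a plan, not an argument: you have not verified that the sum over admissible $(j,l)$ equals $1$ on the support of $\delta_{\beta-1}$ and $0$ off it, and you yourself flag this as ``the hard part.'' Concretely, for a fixed target $e_c \otimes e_d^*$ and each $j$, the source index $a = c + jp^{\beta-1}$ determines a unique $b \equiv 1 \pmod{p^\beta}$ with $b \leq a \leq b + p^\beta - 1$, hence a unique $m = (d-b)/p^{\beta-1}$; while $m \bmod p$ is indeed constant in $j$, the \emph{inequality} $m \geq p-1-l$ (the support condition from Lemma \ref{lemma:Xpower_basic}(ii)) is not, since $m$ itself jumps by multiples of $p$ as $j$ crosses the window boundaries. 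So the set of admissible $l$ genuinely depends on $j$, the Vandermonde sums you intend to invoke are truncated differently for different $j$, and boundary effects near $t' \leq 0$ and $s' > n$ must be handled. Nothing here is obviously false, but nothing is proved either; as written this is an outline of a computation, not a proof.

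For comparison, the paper avoids this coupling by a different order of operations: in Lemma \ref{lemma:powerXbeta_formula} it keeps the inner sum over $s$ intact, collapses it with the Chu--Vandermonde identity \emph{before} reducing modulo $p$, and only then applies Lucas once to the resulting single binomial $\binom{\Delta + (p-1)p^{\beta-1}-1}{(p-1)p^{\beta-1}-1}$. This yields a clean closed formula for $X^{(p-1)p^{\beta-1}} \cdot e_i \otimes e_j^*$ on basis vectors, after which the Proposition follows from a telescoping sum using the auxiliary vectors $z_i$, with the out-of-range terms discarded in one step. If you want to complete your version, you will need to carry out the $(j,l)$ enumeration in full; alternatively, performing Chu--Vandermonde on the $s$-sum first, as the paper does, eliminates most of the bookkeeping you are worried about.
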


In order to prove Proposition \ref{prop:powerXondelta}, we will first need the following lemma, which describes the action of $X^{(p-1)p^{\beta-1}}$ on $V \otimes V^*$.

\begin{lemma}\label{lemma:powerXbeta_formula} Let $1 \leq i, j \leq n$. Then $$X^{(p-1)p^{\beta-1}} \cdot e_{i} \otimes e_j^* = \sum_{\substack{i - (p-1)p^{\beta-1} \leq t' \leq i \\ t' \equiv i \mod{p^{\beta-1}} \\ q \geq 1}} (-1)^{q+1} e_{t'} \otimes (e^*_{t' + j - i - p^{\beta-1} + qp^\beta} + e^*_{t' + j - i + qp^\beta})$$ for all $1 \leq \beta \leq \alpha$.\end{lemma}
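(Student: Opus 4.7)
The plan is to expand $X^{(p-1)p^{\beta-1}} \cdot (e_i \otimes e_j^*)$ using Lemma \ref{lemma:tensorpowerX}, substitute $X^{t}e_i = e_{i-t}$ and the explicit formula for $X^{k-s}e_j^*$ from Lemma \ref{lemma:Xpower_basic}, then apply Lucas' theorem and Vandermonde's identity to simplify. Setting $k = (p-1)p^{\beta-1}$, the diagonal term $s = t = k$ contributes $e_{i-k} \otimes e_j^*$, which corresponds to the $(a,q) = (p-1,1)$ instance of the second family on the right-hand side. For the remaining terms with $s < k$, since the base-$p$ expansion of $k$ has its only nonzero digit equal to $p-1$ at position $\beta-1$, Lucas' theorem forces $t = ap^{\beta-1}$ and $s = bp^{\beta-1}$ with $0 \leq b \leq a \leq p-1$, and Lemma \ref{lemma:pminus1modp} gives $\binom{k}{t}\binom{t}{s} \equiv (-1)^{a}\binom{a}{b} \pmod{p}$. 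Setting $c := p-1-b$ so that $k-s = cp^{\beta-1}$, $\binom{j'-j-1}{cp^{\beta-1}-1}$ is nonzero mod $p$ only when $j' = j + mp^{\beta-1}$ with $m \geq 1$; writing $m = qp + r$ with $0 \leq r \leq p-1$, another application of Lucas gives
\[
\binom{mp^{\beta-1}-1}{cp^{\beta-1}-1} \equiv \begin{cases} \binom{r-1}{c-1}, & r \geq 1, \\ (-1)^{c-1}, & r = 0,\ q \geq 1, \end{cases} \pmod{p}.
\]

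The coefficient of $e_{i-ap^{\beta-1}} \otimes e_{j+mp^{\beta-1}}^{*}$ then reduces to an inner sum over $b$. For $r \geq 1$, Vandermonde's identity gives $\sum_{b} \binom{a}{b}\binom{r-1}{p-2-b} = \binom{a+r-1}{p-2}$; a further Lucas computation shows that, since $1 \leq a+r \leq 2p-2$, this is nonzero mod $p$ only when $a+r \in \{p-1, p\}$, taking values $1$ and $-1$ respectively. For $r = 0$ with $q \geq 1$, the alternating sum $\sum_{b=0}^{\min(a,p-2)} \binom{a}{b}(-1)^{p-2-b}$ vanishes by the binomial theorem unless $a \in \{0, p-1\}$, in which case a direct check yields $\pm 1$.

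Finally, combining these contributions with the global sign $(-1)^{j+j'}(-1)^{a}$ (which reduces to $(-1)^{m+a}$ for odd $p$, since $p^{\beta-1}$ is odd, and is trivial in $K$ when $p=2$), the four surviving subcases --- $(r \geq 1,\ a+r = p)$, $(r \geq 1,\ a+r = p-1)$, $(r = 0,\ a = 0)$, and $(r = 0,\ a = p-1)$ --- together with the isolated $s=k$ contribution match term-by-term the two families of summands on the right-hand side, each with the correct sign $(-1)^{q+1}$ after the reindexing $q' = q+1$ (or $q' = q$ in the $(r=0, a=0)$ case). The main obstacle is precisely this last step of bookkeeping: one must verify that the four subcases plus the $s=k$ term combine, without duplication or omission, to give exactly the two families listed in the statement, and that the signs align under the reindexing. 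Once this matching is carried out, the formula in the lemma follows.
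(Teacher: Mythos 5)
Your proposal is correct in substance and reaches the result by the same overall skeleton as the paper (expand with Lemma \ref{lemma:tensorpowerX}, substitute Lemma \ref{lemma:Xpower_basic}, reduce with Lucas and Vandermonde), but the order in which you apply the two identities is genuinely different, and that difference is exactly what produces the bookkeeping you flag as the main obstacle. The paper does \emph{not} reduce $\binom{t}{s}$ modulo $p$ first: it fixes $t' = i-t$ and the target index $s'$, carries out the Chu--Vandermonde summation over $s$ \emph{exactly over the integers}, so that the coefficient of $e_{t'} \otimes e_{s'}^*$ collapses to the single binomial $(-1)^{\Delta}\binom{\Delta + (p-1)p^{\beta-1}-1}{(p-1)p^{\beta-1}-1}$ with $\Delta = i-t'+s'-j-(p-1)p^{\beta-1}$, and only then applies Lucas once: the binomial survives precisely when $\Delta \equiv 0$ or $p^{\beta-1} \pmod{p^{\beta}}$, with value $(-1)^q$ for $\Delta = qp^{\beta}$ or $qp^{\beta}+p^{\beta-1}$. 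This yields both families and the sign $(-1)^{q+1}$ in one stroke, with no case split and no separate diagonal term. Your route --- Lucas on $\binom{k}{t}\binom{t}{s}$ and on $\binom{j'-j-1}{cp^{\beta-1}-1}$ first, then a digit-level Vandermonde --- is also valid, and I have checked that the final matching you leave open does work out: $(r\geq 1,\ a+r=p)$ gives the $e^*_{t'+j-i+qp^{\beta}}$ family for $a \geq 1$ and $(r=0,\ a=0)$ supplies its $a=0$ terms, while $(r\geq 1,\ a+r=p-1)$ gives the $e^*_{t'+j-i-p^{\beta-1}+qp^{\beta}}$ family for $t' > i-(p-1)p^{\beta-1}$ and $(r=0,\ a=p-1)$ together with the diagonal term supplies $t' = i-(p-1)p^{\beta-1}$; in every case the sign agrees with $(-1)^{q+1}$ after your reindexing. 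One small correction: the diagonal term $e_{i-(p-1)p^{\beta-1}} \otimes e_j^*$ is the $q=1$ member of the \emph{first} family $e^*_{t'+j-i-p^{\beta-1}+qp^{\beta}}$ (indeed $j = t'+j-i-p^{\beta-1}+p^{\beta}$ when $t' = i-(p-1)p^{\beta-1}$), not of $e^*_{t'+j-i+qp^{\beta}}$. What the paper's ordering buys is that this entire four-way verification becomes unnecessary; what yours buys is that the $s=k$ term, where Lemma \ref{lemma:Xpower_basic}(ii) does not literally apply, is handled explicitly rather than implicitly.
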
 

\begin{proof}
From Lemma \ref{lemma:tensorpowerX} and Lemma \ref{lemma:Xpower_basic} (i), we see that $X^{(p-1)p^{\beta-1}} \cdot e_{i} \otimes e_j^*$ is equal to \begin{equation}\label{eq:betafirst}\sum_{\substack{0 \leq t \leq (p-1)p^{\beta-1} \\ 0 \leq s \leq t}} \binom{(p-1)p^{\beta-1}}{t} \binom{t}{s} e_{i-t} \otimes X^{(p-1)p^{\beta-1} - s}e_j^*.\end{equation} By Lucas' theorem, we have $\binom{(p-1)p^{\beta-1}}{t} \equiv 0 \mod{p}$ if $t \not\equiv 0 \mod{p^{\beta-1}}$. If $t \equiv 0 \mod{p^{\beta-1}}$, then by Lucas' theorem and Lemma \ref{lemma:pminus1modp}, we get $\binom{(p-1)p^{\beta-1}}{t} \equiv \binom{p-1}{t/p^{\beta-1}} \equiv (-1)^{t/p^{\beta-1}} \equiv (-1)^t \mod{p}$. Therefore we can write~\eqref{eq:betafirst} as \begin{align}\sum_{\substack{0 \leq t \leq (p-1)p^{\beta-1} \\ t \equiv 0 \mod{p^{\beta-1}} \\ 0 \leq s \leq t}} & (-1)^t \binom{t}{s} e_{i-t} \otimes X^{(p-1)p^{\beta-1} - s}e_j^* \nonumber \\ = \sum_{\substack{i-(p-1)p^{\beta-1} \leq t' \leq i \\ t' \equiv i \mod{p^{\beta-1}}  \\ 0 \leq s \leq i-t'}} & (-1)^{i-t'} \binom{i-t'}{s} e_{t'} \otimes X^{(p-1)p^{\beta-1} - s}e_j^* \label{eq:betasecond}.\end{align} By Lemma \ref{lemma:Xpower_basic} (ii), we have $$X^{(p-1)p^{\beta-1} - s}e_j^* = \sum_{(p-1)p^{\beta-1} - s + j \leq s' \leq n} (-1)^{s'-j} \binom{s'-j-1}{(p-1)p^{\beta-1} - s - 1} e^*_{s'}.$$ Thus in~\eqref{eq:betasecond} the coefficient of $e_{t'} \otimes e_{s'}$ is equal to \begin{equation}\label{eq:betathird}\sum_{(p-1)p^{\beta-1} - s' + j \leq s \leq i-t'} (-1)^{i-t'+s'-j}\binom{i-t'}{s} \binom{s'-j-1}{(p-1)p^{\beta-1} - s - 1}.\end{equation} Set $\Delta = i-t'+s'-j - (p-1)p^{\beta-1}$. If $\Delta < 0$, then~\eqref{eq:betathird} is an empty sum, and thus equal to zero. Suppose next that $\Delta \geq 0$. We have $(-1)^{i-t'+s'-j} = (-1)^{\Delta}$, and by shifting the summation index by $s \mapsto s + (p-1)p^{\beta-1} - s' + j$, we see that~\eqref{eq:betathird} equals \begin{align}& (-1)^\Delta \sum_{0 \leq s \leq \Delta} \binom{i-t'}{s + (p-1)p^{\beta-1} - s' + j} \binom{s'-j-1}{s'-j-1-s} \nonumber \\ &= (-1)^{\Delta} \sum_{0 \leq s \leq \Delta} \binom{i-t'}{\Delta-s} \binom{s'-j-1}{s} \nonumber \\ &= (-1)^{\Delta} \binom{\Delta + (p-1)p^{\beta-1} - 1}{\Delta} \label{eq:chuvandermonde} \\ &= (-1)^{\Delta} \binom{\Delta + (p-1)p^{\beta-1} - 1}{(p-1)p^{\beta-1} - 1} \label{eq:betafourth} \end{align} where~\eqref{eq:chuvandermonde} is given by the Chu-Vandermonde identity, see e.g. \cite[1.2.6, (21)]{Knuth}. Write $\Delta = qp^{\beta} + r$, where $q \geq 0$ and $0 \leq r < p^{\beta}$. Since $$(p-1)p^{\beta-1} - 1 = (p-2)p^{\beta-1} + \sum_{0 \leq j \leq \beta-2} (p-1)p^j,$$ we conclude from Lucas' theorem that $\binom{\Delta + (p-1)p^{\beta-1} - 1}{(p-1)p^{\beta-1} - 1} \not\equiv 0 \mod{p}$ if and only if $r = p^{\beta-1}$ or $r = 0$. Furthermore, if $r = p^{\beta-1}$ or $r = 0$, then $\binom{\Delta + (p-1)p^{\beta-1} - 1}{(p-1)p^{\beta-1} - 1} \equiv (-1)^r \mod{p}$, so~\eqref{eq:betafourth} is equal to $(-1)^q$. 

We conclude then that for $i - (p-1)p^{\beta-1} \leq t' \leq i$ and $1 \leq s' \leq n$, the coefficient of $e_{t'} \otimes e_{s'}$ in $X^{(p-1)p^{\beta-1}} \cdot e_{i} \otimes e_j^*$ is nonzero if and only if $t' \equiv i \mod{p^{\beta-1}}$, and for $\Delta = i-t'+s'-j - (p-1)p^{\beta-1}$ we have $\Delta = qp^\beta$ or $\Delta = qp^{\beta} + p^{\beta-1}$ for some $q \geq 0$. In this case the coefficient of $e_{t'} \otimes e_{s'}$ is equal to $(-1)^q$, and $$s' = t'+j-i-p^{\beta-1} + (q+1)p^{\beta}$$ or $$s' = t'+j-i+(q+1)p^{\beta},$$ according to whether $\Delta = qp^\beta$ or $\Delta = qp^{\beta} + p^{\beta-1}$. From this conclusion, the lemma follows.\end{proof}

\begin{proof}[Proof of Proposition \ref{prop:powerXondelta}]Let $1 \leq i \leq p^{\beta}$ and $0 \leq j \leq k_\beta - 1$. Then $X^{(p-1)p^{\beta-1}} \cdot e_{jp^\beta + i} \otimes e_{jp^\beta + 1}^*$ equals \begin{equation}\label{eq:firstsum}\sum_{\substack{(j-1)p^\beta + i + p^{\beta-1} \leq t' \leq jp^\beta + i \\ t' \equiv i \mod{p^{\beta-1}} \\ q \geq 1}} (-1)^{q+1} e_{t'} \otimes (e^*_{t' - i + 1 - p^{\beta-1} + qp^\beta} + e^*_{t' - i + 1 + qp^\beta})\end{equation} by Lemma \ref{lemma:powerXbeta_formula}. Taking the sum of~\eqref{eq:firstsum} over all $0 \leq j \leq k_\beta - 1$, we get \begin{equation}\label{eq:secondsum}\sum_{\substack{-p^\beta + i + p^{\beta-1} \leq t' \leq (k_\beta - 1)p^\beta + i \\ t' \equiv i \mod{p^{\beta-1}} \\ q \geq 1}} (-1)^{q+1} e_{t'} \otimes (e^*_{t' - i + 1 - p^{\beta-1} + qp^\beta} + e^*_{t' - i + 1 + qp^\beta}).\end{equation} Note that if $t' > (k_\beta - 1)p^\beta + i$ and $t' \equiv i \mod{p^{\beta-1}}$, then for all $q \geq 1$ we have $t' - i + 1 - p^{\beta-1} + qp^\beta > n$ and thus $e^*_{t' - i + 1 - p^{\beta-1} + qp^\beta} + e^*_{t' - i + 1 + qp^\beta} = 0$. Similarly if $t' < -p^{\beta}+i+p^{\beta-1}$ and $t' \equiv i \mod{p^{\beta-1}}$, then $t' \leq 0$ and thus $e_{t'} = 0$. Therefore we can write~\eqref{eq:secondsum} as \begin{equation}\label{eq:thirdsum}z_{i+p^{\beta-1}} + z_{i},\end{equation}where we define $$z_i = \sum_{\substack{t' \equiv i \mod{p^{\beta-1}} \\ q \geq 1}} (-1)^{q+1} e_{t'} \otimes e^*_{t' - i + 1 + qp^\beta}.$$ Let $1 \leq i' \leq p^{\beta}$. Since~\eqref{eq:secondsum} and~\eqref{eq:thirdsum} are equal, we have \begin{align*} X^{(p-1)p^{\beta-1}} & \cdot \sum_{\substack{1 \leq i \leq p^{\beta} \\ i \equiv i' \mod{p^{\beta-1}}}} \sum_{0 \leq j \leq k_\beta - 1} (-1)^{i+1} e_{jp^\beta + i} \otimes e_{jp^\beta + 1}^* \\[5pt] &= \sum_{0 \leq f \leq p-1} (-1)^{i'+f} (z_{i'+(f+1)p^{\beta-1}} + z_{i' + fp^{\beta-1}}) \\[5pt] &= z_{i'} + z_{i'+p^\beta}.\end{align*} Therefore \begin{equation}\label{eq:finalsum}X^{(p-1)p^{\beta-1}} \cdot \delta_{\beta} = \sum_{1 \leq i' \leq p^{\beta-1}} (z_{i'} + z_{i'+p^\beta}).\end{equation} Finally, for all $1 \leq i' \leq p^{\beta-1}$ we have \begin{align*}z_{i'} + z_{i'+p^{\beta}} &= z_{i'} + \sum_{\substack{t' \equiv i' \mod{p^{\beta-1}} \\ q \geq 1}} (-1)^{q+1} e_{t'} \otimes e^*_{t' - i' + 1 + (q-1)p^\beta} \\ &= z_{i'} + \left( -z_{i'} + \sum_{t' \equiv i' \mod{p^{\beta}}} e_{t'} \otimes e^*_{t'-i'+1} \right) \\ &= \sum_{0 \leq j \leq k_{\beta-1}-1} e_{jp^{\beta-1} + i'} \otimes e^*_{jp^{\beta-1}+1}\end{align*} which together with~\eqref{eq:finalsum} proves that $X^{(p-1)p^{\beta-1}} \cdot \delta_{\beta} = \delta_{\beta-1}$.\end{proof}

\begin{seur}\label{corollary:mainresult}
Let $1 \leq \beta \leq \alpha$. Then $X^{p^{\beta} - 1} \cdot \delta_\beta = \delta_0$.
\end{seur}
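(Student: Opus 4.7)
The plan is a straightforward induction on $\beta$, iterating Proposition \ref{prop:powerXondelta}. The base case $\beta = 1$ is exactly the statement $X^{p-1} \cdot \delta_1 = \delta_0$, which is Proposition \ref{prop:powerXondelta} specialized to $\beta = 1$ (note that $p^1 - 1 = (p-1)p^0$).

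For the inductive step, I would assume that $X^{p^{\beta-1}-1} \cdot \delta_{\beta-1} = \delta_0$ holds. Applying Proposition \ref{prop:powerXondelta} gives $X^{(p-1)p^{\beta-1}} \cdot \delta_\beta = \delta_{\beta-1}$, and then composing with $X^{p^{\beta-1}-1}$ yields
$$X^{p^{\beta-1}-1} \cdot X^{(p-1)p^{\beta-1}} \cdot \delta_\beta \;=\; X^{p^{\beta-1}-1} \cdot \delta_{\beta-1} \;=\; \delta_0.$$
The only arithmetic to check is that the exponents add correctly:
$$p^{\beta-1} - 1 + (p-1)p^{\beta-1} \;=\; p \cdot p^{\beta-1} - 1 \;=\; p^\beta - 1,$$
which one may also view as the telescoping geometric sum $\sum_{j=0}^{\beta-1}(p-1)p^j = p^\beta - 1$ obtained by peeling off one factor of Proposition \ref{prop:powerXondelta} at each stage from $\delta_\beta$ down to $\delta_0$.

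There is no substantial obstacle in this step: all the combinatorial work has already been absorbed into Proposition \ref{prop:powerXondelta}, and the corollary is purely a matter of iterating it and summing exponents. Accordingly, the proof I would write is essentially two lines.
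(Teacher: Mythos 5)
Your proof is correct and is essentially the paper's own argument: the paper likewise iterates Proposition \ref{prop:powerXondelta} down from $\delta_\beta$ to $\delta_0$ and notes that the exponents sum to $p^{\beta}-1 = \sum_{0 \leq j \leq \beta-1}(p-1)p^j$. Casting it as a formal induction on $\beta$ is only a cosmetic difference.
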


\begin{proof}Since $p^{\beta} - 1 = \sum_{0 \leq j \leq \beta - 1} (p-1)p^j$, the claim is immediate from Proposition \ref{prop:powerXondelta}.\end{proof}

\section{Main results}\label{section:mainresult}

In this section, we prove our main results. Below for a unipotent element $u \in \SL(V)$, we use the notation $V_i$ for indecomposable $K[u]$-modules as in Section \ref{section:tensordecomp}. Note that for $G = \SL(V)$, the $G$-conjugacy class of $u \in G$ is determined by the decomposition of $V \downarrow K[u]$ into indecomposable summands. By \cite[Proposition 2 of Chapter II]{Gerstenhaber}, the same is also true for $G = \Sp(V)$ and $G = \operatorname{O}(V)$ if $p > 2$.

\begin{lause}\label{thm:mainthmA}
Let $G = \SL(V)$, where $\dim V = n$ for some $n \geq 2$. Let $u \in G$ be a unipotent element and $V \downarrow K[u] = V_{d_1} \oplus \cdots \oplus V_{d_t}$, where $t \geq 1$ and $d_r \geq 1$ for all $1 \leq r \leq t$. Set $\alpha = \nu_p(\gcd(d_1, \ldots, d_t))$. Let $u_0$ be the action of $u$ on $V \otimes V^*$, and let $u_0''$ be the action of $u$ on $L_G(\varpi_1 + \varpi_{n-1})$. Then the Jordan block sizes of $u_0''$ are determined from those of $u_0$ as follows:

\begin{enumerate}[\normalfont (i)]
\item If $p \nmid n$, then $r_1(u_0'') = r_1(u_0) - 1$ and $r_m(u_0'') = r_m(u_0)$ for all $m \neq 1$.
\item If $p \mid n$ and $\alpha = 0$, then $r_1(u_0'') = r_1(u_0) - 2$ and $r_m(u_0'') = r_m(u_0)$ for all $m \neq 1$.
\item If $p \mid n$ and $\alpha > 0$:
	\begin{enumerate}[\normalfont (a)]
	\item If $p \mid \frac{n}{p^{\alpha}}$, then $r_{p^{\alpha}}(u_0'') = r_{p^{\alpha}}(u_0) - 2$, $r_{p^{\alpha}-1}(u_0'') = 2$ and $r_m(u_0'') = r_m(u_0)$ for all $m \neq p^{\alpha}, p^{\alpha} - 1$.
	\item If $p \nmid \frac{n}{p^{\alpha}}$ and $p^{\alpha} > 2$, then $r_{p^{\alpha}}(u_0'') = r_{p^{\alpha}}(u_0) - 1$, $r_{p^{\alpha}-2}(u_0'') = 1$ and $r_m(u_0'') = r_m(u_0)$ for all $m \neq p^{\alpha}, p^{\alpha} - 2$.
	\item If $p \nmid \frac{n}{p^{\alpha}}$ and $p^{\alpha} = 2$, then $r_{2}(u_0'') = r_{2}(u_0) - 1$ and $r_m(u_0'') = r_m(u_0)$ for all $m \neq 2$.
	\end{enumerate}
\end{enumerate}
\end{lause}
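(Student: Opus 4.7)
The plan is to combine Lemma \ref{lemma:typeAomega} with two applications of Lemma \ref{jordanrestriction} (one for a codimension-one inclusion, the other for a one-dimensional quotient obtained by dualizing), reducing the problem to determining the ``level'' of $\delta_0 = \sum_i e_i \otimes e_i^*$ in the Jordan block structure of $u_0$. Case (i) is immediate: when $p \nmid n$, Lemma \ref{lemma:typeAomega} gives $V \otimes V^* \cong L_G(\varpi_1 + \varpi_{n-1}) \oplus L_G(0)$ as $K[u]$-modules, and the trivial summand accounts for exactly one Jordan block of size one. For the rest of the proof assume $p \mid n$; Lemma \ref{lemma:typeAomega} and Lemma \ref{lemma:typeAomegabasis} give submodules $M = \langle \delta_0 \rangle \subset N = \operatorname{Ker}(\operatorname{tr}) \subset V \otimes V^*$ with $N/M \cong L_G(\varpi_1 + \varpi_{n-1})$.

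Applying Lemma \ref{jordanrestriction} to $N \subset V \otimes V^*$ and its quotient analog to $N \twoheadrightarrow N/M$, the change in Jordan blocks is controlled by two integers, $m_1 = \max\{m : \operatorname{Ker} X_0^m \subseteq N\}$ and $m_2 = \max\{m : \delta_0 \in X_N^m(N)\}$. Since $V \otimes V^* \cong \operatorname{End}(V)$ is self-dual under $\langle a \otimes b^*, c \otimes d^* \rangle = b^*(c)\,d^*(a)$ with $u_0$ form-preserving, a direct computation gives $X_0^* = -u_0^{-1} X_0$ and hence $(\operatorname{Ker} X_0^m)^\perp = \operatorname{Im} X_0^m$; thus $m_1$ equals the largest $m$ with $\delta_0 \in \operatorname{Im} X_0^m$.

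The central claim is $m_1 = p^\alpha - 1$. Decomposing $V = \bigoplus_i V_{d_i}$ gives $\delta_0 = \sum_i \delta_0^{(i)}$ with $\delta_0^{(i)} \in V_{d_i} \otimes V_{d_i}^*$, and Corollary \ref{corollary:mainresult} applied in each summand yields $\delta_0^{(i)} = X_0^{p^{\alpha_i} - 1} \delta_{\alpha_i}^{(i)}$ with $\alpha_i = \nu_p(d_i) \geq \alpha$; hence $\delta_0 \in \operatorname{Im} X_0^{p^\alpha - 1}$. To prove the reverse inequality $\delta_0 \notin \operatorname{Im} X_0^{p^\alpha}$ I will fix an index $i_0$ with $\alpha_{i_0} = \alpha$ and work inside $V_{d_{i_0}} \otimes V_{d_{i_0}}^*$: the self-duality argument applied there (together with the first half of the claim, now for $\delta_0^{(i_0)}$) shows $\operatorname{Ker} X_0^{p^\alpha - 1} \cap (V_{d_{i_0}} \otimes V_{d_{i_0}}^*) \subseteq \operatorname{Ker}(\operatorname{tr}_{i_0})$; an equation $X_0^{p^\alpha} w = \delta_0^{(i_0)}$ would then force $\operatorname{tr}(\delta_\alpha^{(i_0)}) = \operatorname{tr}(X_0 w) = 0$, contradicting the formula $\operatorname{tr}(\delta_\alpha^{(i_0)}) = d_{i_0}/p^\alpha$, which is nonzero mod $p$ by $\alpha_{i_0} = \alpha$. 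I expect this verification to be the main technical obstacle.

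Finally, the subcases are distinguished by computing the trace of a preimage. Take $v = \sum_i X_0^{p^{\alpha_i} - p^\alpha} \delta_{\alpha_i}^{(i)}$ as preimage of $\delta_0$ under $X_0^{p^\alpha - 1}$; the vanishing $\operatorname{tr}(X_0 w) = 0$ leaves only the terms with $\alpha_i = \alpha$ and the definition~\eqref{eq:deltabeta} yields $\operatorname{tr}(v) \equiv n/p^\alpha \pmod{p}$, well-defined modulo $p$ since preimages differ by elements of $\operatorname{Ker} X_0^{p^\alpha - 1} \subseteq N$. When $p \mid n/p^\alpha$ (cases (ii) and (iii)(a)) the trace vanishes, $v \in N$ forces $m_2 = m_1$, and two Jordan blocks of size $p^\alpha$ each shorten by one (or two blocks of size one are removed when $\alpha = 0$). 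When $p \nmid n/p^\alpha$ (cases (iii)(b) and (iii)(c)) the trace is nonzero, $m_2 = m_1 - 1$, and a single block of size $p^\alpha$ shortens by two --- or is removed entirely when $p^\alpha = 2$, producing case (iii)(c).
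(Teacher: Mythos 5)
Your proposal is correct and follows essentially the same strategy as the paper: case (i) from the splitting in Lemma \ref{lemma:typeAomega}, then for $p \mid n$ two applications of Lemma \ref{jordanrestriction} to $\langle \delta_0\rangle \subset \operatorname{Ker}\varphi \subset V\otimes V^*$, with the level of $\delta_0$ pinned down via Corollary \ref{corollary:mainresult} and the computation $\varphi(\delta)=n/p^\alpha$ deciding the subcases. The only real local difference is the upper bound $\operatorname{Ker}X_0^{p^\alpha}\not\subseteq\operatorname{Ker}\varphi$: the paper exhibits an explicit $u^{p^\alpha}$-fixed vector $\sum_j e^{(r')}_{1+jp^\alpha}\otimes e^{(r')*}_{1+jp^\alpha}$ of nonzero trace, whereas you deduce the equivalent statement $\delta_0\notin\operatorname{Im}X_0^{p^\alpha}$ from the invariant form and $\operatorname{tr}(\delta_\alpha^{(i_0)})=d_{i_0}/p^\alpha\neq 0$; both work.
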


\begin{proof}If $p \nmid n$, then $V \otimes V^* \cong L_G(\varpi_1 + \varpi_{n-1}) \oplus L_G(0)$ by Lemma \ref{lemma:typeAomega}, so it is obvious that the theorem holds in this case. Suppose then for the rest of the proof that $p \mid n$.

We first need to set up a suitable basis for $V$ and some notation. Let $V = W_1 \oplus \cdots \oplus W_t$ such that $W_r$ are $u$-invariant and $W_r \cong V_{d_r}$ for all $1 \leq r \leq t$. For each $r$, choose a basis $(e_j^{(r)})_{1 \leq j \leq d_{r}}$ of $W_r$ such that $ue_j^{(r)} = e_j^{(r)} + e_{j-1}^{(r)}$ for all $1 \leq j \leq d_r$, where we set $e_j^{(r)} = 0$ for all $j \leq 0$. For the basis $(e_j^{(r)})$ of $V$, we let $(e_j^{(r)^*})$ be the corresponding dual basis of $V^*$. Next, let $\varphi: V \otimes V^* \rightarrow K$ be the surjective morphism of $G$-modules (where $G$ acts trivially on $K$) defined by $\varphi(v \otimes f) = f(v)$. (With the usual isomorphism $V \otimes V^* \cong \operatorname{End}(V)$, this map corresponds to the trace map $\operatorname{End}(V) \rightarrow K$.) For all $1 \leq r \leq t$, we let $\delta_0^{(r)}$ be the element $$\sum_{1 \leq j \leq d_r} e_j^{(r)} \otimes  e_j^{(r)^*}$$ of $V \otimes V^*$. Note that $\delta_0 = \sum_{1 \leq r \leq t} \delta_0^{(r)}$ spans the unique $1$-dimensional $G$-submodule of $V \otimes V^*$ by Lemma \ref{lemma:typeAomegabasis}.  Furthermore, we have $\delta_0 \in \operatorname{Ker} \varphi$ since $p \mid n$, so \begin{equation}\label{eq:omega1omegal}\operatorname{Ker} \varphi / \langle \delta_0 \rangle \cong L_G(\varpi_1 + \varpi_{n-1})\end{equation} by Lemma \ref{lemma:typeAomega}. We denote by $u_0'$ the action of $u_0$ on $V \otimes V^* / \langle \delta_0 \rangle$, and by~\eqref{eq:omega1omegal} we can assume without loss of generality that $u_0''$ is the action of $u_0$ induced on $\operatorname{Ker} \varphi / \langle \delta_0 \rangle$. Let $X$ be the element $u-1$ of $K[u]$.

Our basic strategy in the proof of the theorem consists of two applications of Lemma \ref{jordanrestriction}. First we apply it to $u_0$ to find the Jordan block sizes of $u_0'$, and later we shall apply it to $u_0'$ to find the Jordan block sizes of $u_0''$.

As the first step of the actual proof, we will show that \begin{equation}\label{eq:firstrestriction}X^{p^{\alpha}} \cdot v = 0 \text{ for some } v \in V \otimes V^* - \operatorname{Ker} \varphi.\end{equation} To find such a $v$, choose a $1 \leq r' \leq t$ such that $\nu_p(d_{r'}) = \alpha$, so $d_{r'} = p^{\alpha} k_{r'}$ where $p \nmid k_{r'}$. Now $(u-1) e_{j}^{(r')} = e_{j-1}^{(r')}$ for all $j$, so $(u^{p^{\alpha}} - 1)e_{j}^{(r')} = (u-1)^{p^{\alpha}} e_{j}^{(r')} = e_{j-p^{\alpha}}^{(r')}$ for all $j$. Therefore the subspace $\langle e_{1 + jp^{\alpha}}^{(r')} : 0 \leq j \leq k_{r'}-1 \rangle$ is $u^{p^{\alpha}}$-invariant, so by Lemma \ref{lemma:typeAomega} the vector $$v = \sum_{0 \leq j \leq k_{r'}-1} e_{1 + jp^{\alpha}}^{(r')} \otimes {e_{1 + jp^{\alpha}}^{(r')^*}}$$ is fixed by $u^{p^{\alpha}}$. Hence $X^{p^{\alpha}} \cdot v = 0$, and $v \not\in \operatorname{Ker} \varphi$ since $\varphi(v) = k_{r'}$. This proves~\eqref{eq:firstrestriction}.

Since $V \otimes V^*$ is self-dual and uniserial by Lemma \ref{lemma:typeAomega}, we have $(\operatorname{Ker} \varphi)^* \cong V \otimes V^* / \langle \delta_0 \rangle$ as $G$-modules, so \begin{equation}\label{eq:dualityequation}\operatorname{Ker} \varphi \cong V \otimes V^* / \langle \delta_0 \rangle\end{equation} as $K[u]$-modules. Thus if $\alpha = 0$, then~\eqref{eq:firstrestriction} and Lemma \ref{jordanrestriction} show that $r_1(u_0') = r_1(u_0) - 1$ and $r_m(u_0') = r_m(u_0)$ for all $m \neq 1$. Furthermore, we have $\operatorname{Ker}(u_0' - 1) \not\subset \operatorname{Ker} \varphi / \langle \delta_0 \rangle$ by~\eqref{eq:firstrestriction}. Thus Lemma \ref{jordanrestriction} gives $r_1(u_0'') = r_1(u_0) - 2$ and $r_m(u_0'') = r_m(u_0)$ for all $m \neq 1$, proving the theorem in this case.

Next we consider the case where $\alpha > 0$. The smallest Jordan block size of $u_0$ is $p^{\alpha}$ by Lemma \ref{lemma:smallestontsquare}, so \begin{equation}\label{eq:keralpha1} \operatorname{Ker}(u_0 - 1)^{p^{\alpha}-1} \subset \operatorname{Ker} \varphi \end{equation} by Lemma \ref{jordanrestriction}. Thus~\eqref{eq:firstrestriction} and Lemma \ref{jordanrestriction}, along with~\eqref{eq:dualityequation}, gives $r_{p^{\alpha}-1}(u_0') = 1$, $r_{p^{\alpha}}(u_0') = r_{p^{\alpha}}(u_0) - 1$, and $r_{m}(u_0') = r_m(u_0)$ for all $m \neq p^\alpha, p^{\alpha}-1$. Note that \begin{equation}\label{eq:alpha}\operatorname{Ker}(u_0' - 1)^{p^{\alpha}} \not\subset \operatorname{Ker} \varphi / \langle \delta_0 \rangle\end{equation} by~\eqref{eq:firstrestriction}. Furthermore, since $p^{\alpha}-1$ is the smallest Jordan block size of $u_0'$, we have \begin{equation}\label{eq:alpham2}\operatorname{Ker}(u_0' - 1)^{p^{\alpha}-2} \subset \operatorname{Ker} \varphi / \langle \delta_0 \rangle\end{equation} by Lemma \ref{jordanrestriction}. Hence to describe the Jordan block sizes of $u_0''$ in terms of the Jordan block sizes of $u_0'$ using Lemma \ref{jordanrestriction}, it remains to determine when \begin{equation}\label{eq:finalquestion}\operatorname{Ker}(u_0' - 1)^{p^{\alpha}-1} \subset \operatorname{Ker} \varphi / \langle \delta_0 \rangle.\end{equation} 

We shall show that~\eqref{eq:finalquestion} holds if and only if $p \mid \frac{n}{p^{\alpha}}$, which together with~\eqref{eq:alpha}, \eqref{eq:alpham2}, and Lemma \ref{jordanrestriction} completes the proof of the theorem. To this end, write $d_r = p^{\alpha}k_r$ and set $$\delta_r = \sum_{1 \leq i \leq p^{\alpha}} \sum_{0 \leq j \leq k_r - 1} (-1)^{i+1} (e_{jp^\alpha + i}^{(r)} \otimes e_{jp^\alpha + 1}^{(r)^*})$$ for all $1 \leq r \leq t$. Note that the action of $u$ on $W_r$ is defined as in Section \ref{section:calculations}, and $\delta_r$ is defined as in~\eqref{eq:deltabeta}, so it follows from Corollary \ref{corollary:mainresult} that $X^{p^{\alpha}-1} \cdot \delta_r = \delta_0^{(r)}$ for all $1 \leq r \leq t$. Thus for $\delta = \sum_{1 \leq r \leq t} \delta_r$ we have $X^{p^{\alpha}-1} \cdot \delta = \delta_0$. Any solution to $X^{p^{\alpha}-1} \cdot z = \delta_0$ is equal to $\delta$ modulo $\operatorname{Ker}(u_0 - 1)^{p^{\alpha} - 1}$, so we conclude that $$\operatorname{Ker}(u_0' - 1)^{p^{\alpha} - 1} = \left( \operatorname{Ker}(u_0 - 1)^{p^{\alpha} - 1} + \langle \delta \rangle \right) / \langle \delta_0 \rangle.$$ Then from~\eqref{eq:keralpha1}, we see that~\eqref{eq:finalquestion} holds if and only if $\delta \in \operatorname{Ker} \varphi$. Now $\varphi(\delta_r) = k_r$, so $\varphi(\delta) = k_1 + \cdots + k_r = \frac{n}{p^{\alpha}}$, and consequently~\eqref{eq:finalquestion} holds if and only if $p \mid \frac{n}{p^{\alpha}}$.\end{proof}

\begin{seur}\label{corollary:SPmain}Assume $p > 2$, and let $G = \Sp(V)$, where $\dim V = n$ for some $n \geq 4$. Let $u \in G$ be a unipotent element and $V \downarrow K[u] = V_{d_1} \oplus \cdots \oplus V_{d_t}$, where $t \geq 1$ and $d_r \geq 1$ for all $1 \leq r \leq t$. Set $\alpha = \nu_p(\gcd(d_1, \ldots, d_t))$. Let $u_0$ be the action of $u$ on $\wedge^2(V)$, and let $u_0''$ be the action of $u$ on $L_G(\varpi_2)$. Then the Jordan block sizes of $u_0''$ are determined from those of $u_0$ by the rules (i) -- (iii) of Theorem \ref{thm:mainthmA}.\end{seur}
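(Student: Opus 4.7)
The plan is to reduce to Theorem \ref{thm:mainthmA} via the inclusion $\Sp(V) \subset \SL(V)$, rather than repeat the calculations of Section \ref{section:calculations} inside $\wedge^2(V)$. The key intermediate step is to show that, as $\Sp(V)$-modules,
\begin{equation*}
L_{\SL(V)}(\varpi_1 + \varpi_{n-1}) \downarrow \Sp(V) \;\cong\; L_{\Sp(V)}(\varpi_2) \oplus L_{\Sp(V)}(2\varpi_1).
\end{equation*}
Note that $V \otimes V^* \cong V \otimes V = \wedge^2(V) \oplus S^2(V)$ as $\Sp(V)$-modules, via the isomorphism $V \cong V^*$ induced by the symplectic form. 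Under this identification, Lemma \ref{lemma:typeAomegabasis} places the $\SL(V)$-socle of $V \otimes V^*$ inside the $\wedge^2(V)$-summand, where it coincides with the socle of $\wedge^2(V)$ given by Lemma \ref{lemma:typeComega}. Consequently, realizing $L_{\SL(V)}(\varpi_1 + \varpi_{n-1})$ as $\operatorname{Ker}\varphi / \langle \delta_0 \rangle$ as in the proof of Theorem \ref{thm:mainthmA}, it decomposes as $(L_{\Sp(V)}(0) | L_{\Sp(V)}(\varpi_2))/\langle\delta_0\rangle \oplus S^2(V)$, which by Lemma \ref{lemma:typeComega} is exactly $L_{\Sp(V)}(\varpi_2) \oplus L_{\Sp(V)}(2\varpi_1)$. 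The case $p \nmid n$ is similar but easier, using semisimplicity directly.

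Granting the restriction formula, for any unipotent $u \in \Sp(V)$, Lemma \ref{lemma:typeComega} gives $L_{\Sp(V)}(2\varpi_1) \cong S^2(V)$, so the multiset of Jordan block sizes of $u$ on $L_{\Sp(V)}(\varpi_2)$ equals that on $L_{\SL(V)}(\varpi_1+\varpi_{n-1})$ minus that on $S^2(V)$. Meanwhile, $V \otimes V^* \cong \wedge^2(V) \oplus S^2(V)$ as $K[u]$-modules, so the Jordan blocks on $V \otimes V^*$ are the union of those on $\wedge^2(V)$ and on $S^2(V)$. By Theorem \ref{thm:mainthmA}, the blocks on $L_{\SL(V)}(\varpi_1+\varpi_{n-1})$ are obtained from those on $V \otimes V^*$ by the modifications (i)--(iii), with $\alpha = \nu_p(\gcd(d_1, \ldots, d_t))$. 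Substituting, the $S^2(V)$-contributions cancel, leaving precisely the modifications (i)--(iii) applied to the Jordan blocks on $\wedge^2(V)$.

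The main obstacle is the restriction formula, especially establishing semisimplicity when $p \mid n$. This boils down to locating the $\SL(V)$-socle and head of $V \otimes V^*$ inside the decomposition $\wedge^2(V) \oplus S^2(V)$; both must land in the $\wedge^2(V)$-summand. One must also verify, when performing the cancellation of $S^2(V)$-terms, that the blocks being modified by rules (i)--(iii) (the $V_1$'s in (i)--(ii) and the $V_{p^\alpha}$'s in (iii)) correspond to blocks coming from the $\wedge^2(V)$-side rather than the $S^2(V)$-side. This follows from the socle/head identification together with the uniserial structure of $\wedge^2(V)$ and the fact that the smallest Jordan block size in both $\wedge^2(V)$ and $S^2(V)$ is at least $p^\alpha$ (Lemma \ref{lemma:smallestontsquare}).
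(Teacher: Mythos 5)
Your proposal is correct and follows essentially the same route as the paper: both establish the $\Sp(V)$-module isomorphisms $V \otimes V^* \cong \wedge^2(V) \oplus S^2(V)$ and $L_{\SL(V)}(\varpi_1+\varpi_{n-1}) \cong L_{\Sp(V)}(\varpi_2) \oplus S^2(V)$ and then cancel the common $S^2(V)$ contribution so that Theorem \ref{thm:mainthmA} applies directly. Your final worry about which summand the modified blocks come from is unnecessary --- the rules (i)--(iii) only involve multiplicities, and since $r_m$ of the $S^2(V)$ part subtracts identically from both sides, the cancellation is automatic.
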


\begin{proof}We begin by showing that we have isomorphisms \begin{equation}\label{eq:firstselfdual}V \otimes V^* \cong \wedge^2(V) \oplus S^2(V)\end{equation} and \begin{equation}\label{eq:secondselfdual}L_{\SL(V)}(\varpi_1 + \varpi_{n-1}) \cong L_G(\varpi_2) \oplus S^2(V)\end{equation} of $G$-modules. The isomorphism~\eqref{eq:firstselfdual} follows since $V \cong V^*$, and furthermore since $V \otimes V \cong \wedge^2(V) \oplus S^2(V)$ when $p > 2$. All the trivial $G$-composition factors of $V \otimes V^* \cong \wedge^2(V) \oplus S^2(V)$ lie in the $\wedge^2(V)$ summand by Lemma \ref{lemma:typeComega}, so~\eqref{eq:secondselfdual} follows from Lemma \ref{lemma:typeAomega} and Lemma \ref{lemma:typeComega}.

By~\eqref{eq:firstselfdual} and~\eqref{eq:secondselfdual}, the Jordan block sizes of $u$ on $\wedge^2(V)$ and $L_G(\varpi_2)$ differ in the same way as the Jordan block sizes of $u$ on $V \otimes V^*$ and $L_{\SL(V)}(\varpi_1 + \varpi_{n-1})$. Thus the result follows from Theorem \ref{thm:mainthmA}.
\end{proof}

\begin{seur}\label{corollary:SOmain}Assume $p > 2$, and let $G = \SO(V)$, where $\dim V = n$ for some $n \geq 5$. Let $u \in G$ be a unipotent element and $V \downarrow K[u] = V_{d_1} \oplus \cdots \oplus V_{d_t}$, where $t \geq 1$ and $d_r \geq 1$ for all $1 \leq r \leq t$. Set $\alpha = \nu_p(\gcd(d_1, \ldots, d_t))$. Let $u_0$ be the action of $u$ on $S^2(V)$, and let $u_0''$ be the action of $u$ on $L_G(2\varpi_1)$. Then the Jordan block sizes of $u_0''$ are determined from those of $u_0$ by the rules (i) -- (iii) of Theorem \ref{thm:mainthmA}.\end{seur}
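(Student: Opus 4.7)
The plan is to mirror the proof of Corollary \ref{corollary:SPmain}, swapping the roles of $\wedge^2(V)$ and $S^2(V)$. The two key isomorphisms of $G$-modules I would establish are
\begin{equation*}V \otimes V^* \cong \wedge^2(V) \oplus S^2(V)\end{equation*}
and
\begin{equation*}L_{\SL(V)}(\varpi_1 + \varpi_{n-1}) \cong L_G(2\varpi_1) \oplus \wedge^2(V).\end{equation*}
The first follows exactly as in the symplectic case: the non-degenerate symmetric form defining $\SO(V)$ gives $V \cong V^*$ as $G$-modules, and $V \otimes V \cong \wedge^2(V) \oplus S^2(V)$ is valid because $p > 2$.

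For the second isomorphism, I would combine Lemma \ref{lemma:typeAomega} with Lemma \ref{lemma:typeBDomega}. By Lemma \ref{lemma:typeBDomega}, $\wedge^2(V) \cong L_G(\varpi_2)$ is $G$-irreducible, and in particular contains no trivial $G$-composition factors, so under the first isomorphism every trivial $G$-composition factor of $V \otimes V^*$ lies in the $S^2(V)$ summand. In particular the unique $1$-dimensional $\SL(V)$-submodule $\langle \delta_0 \rangle$ (from Lemma \ref{lemma:typeAomegabasis}) lies in $S^2(V)$. A short case analysis, split according to whether $p \mid n$ or $p \nmid n$, then shows that $L_{\SL(V)}(\varpi_1 + \varpi_{n-1})$, realized as the $G$-module subquotient $\operatorname{Ker}\varphi/\langle \delta_0 \rangle$ of $V \otimes V^*$, decomposes as a direct sum of the $G$-irreducibles $\wedge^2(V)$ and $L_G(2\varpi_1)$, as claimed.

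With both isomorphisms in hand, the conclusion is immediate: since $\wedge^2(V)$ occurs as a common direct summand on both sides, the Jordan block sizes of $u$ on $V \otimes V^*$ and on $L_{\SL(V)}(\varpi_1 + \varpi_{n-1})$ differ in exactly the same way as the Jordan block sizes of $u$ on $S^2(V)$ and on $L_G(2\varpi_1)$. The transformation rules (i)--(iii) of Theorem \ref{thm:mainthmA} thus transfer unchanged to the current setting.

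The main obstacle I anticipate is the verification that the second isomorphism is an actual splitting of $G$-modules rather than merely an equality of composition factors; here the key point is the identification $\delta_0 \in S^2(V)$, which results from the symmetry of the defining form and is what distinguishes the argument from the symplectic case (where $\delta_0 \in \wedge^2(V)$ instead, leading to the other decomposition).
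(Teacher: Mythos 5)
Your proposal is correct and follows essentially the same route as the paper: the paper's proof of Corollary \ref{corollary:SOmain} simply invokes the argument of Corollary \ref{corollary:SPmain} with the two isomorphisms $V \otimes V^* \cong \wedge^2(V) \oplus S^2(V)$ and $L_{\SL(V)}(\varpi_1 + \varpi_{n-1}) \cong \wedge^2(V) \oplus L_G(2\varpi_1)$, exactly as you do. Your additional remarks justifying the second isomorphism (all trivial $G$-composition factors, and in particular $\langle \delta_0 \rangle$, lie in the $S^2(V)$ summand since $\wedge^2(V) \cong L_G(\varpi_2)$ is irreducible) are a correct elaboration of the step the paper leaves to Lemmas \ref{lemma:typeAomega} and \ref{lemma:typeBDomega}.
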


\begin{proof}Similarly to the proof of Corollary \ref{corollary:SPmain}, the result follows using the isomorphisms $$V \otimes V^* \cong \wedge^2(V) \oplus S^2(V)$$ and $$L_{\SL(V)}(\varpi_1 + \varpi_{n-1}) \cong \wedge^2(V) \oplus L_G(2 \varpi_1)$$ of $G$-modules, see Lemma \ref{lemma:typeBDomega}.\end{proof}

\clearpage
\begin{esim}Let $G = \SL(V)$ with $\dim V = n$ for $n \geq 2$. In Table \ref{table:examples} below, we give for all $2 \leq n \leq 6$ and all unipotent elements $u \in G$ the Jordan normal form of the action of $u$ on $V \otimes V^*$ and $L_G(\omega_1 + \omega_{n-1})$, in the case where $p \mid n$. These examples illustrate all of the cases (ii), (iii)(a) -- (c) of Theorem \ref{thm:mainthmA}. In the table, we use the notation $d_1^{n_1}, \ldots, d_t^{n_t}$ for a $K[u]$-module of the form $n_1 \cdot V_{d_1} \oplus \cdots \oplus n_t \cdot V_{d_t}$, where $0 < d_1 < \cdots < d_t$ and $n_i \geq 1$ for all $1 \leq i \leq t$.\end{esim}

\begin{table}[!htbp]
\centering
\caption{}\label{table:examples}
\footnotesize
\begin{tabular}{| c | l | l | l |}
\hline
$G$              & $V \downarrow K[u]$       & $V \otimes V^* \downarrow K[u]$         & $L_G(\omega_1 + \omega_{n-1}) \downarrow K[u]$ \\ \hline
$n = 2$, $p = 2$ & $2$ & $2^2$ & $2$ \\
                 & $1^2$ & $1^4$ & $1^2$ \\
& & & \\
$n = 3$, $p = 3$ & $3$        & $3^{3}$               & $1, 3^{2}$ \\
                 & $1, 2$     & $1^{2}, 2^{2}, 3$     & $2^{2}, 3$ \\
                 & $1^{3}$    & $1^{9}$               & $1^{7}$ \\
								 & & & \\
$n = 4$, $p = 2$ & $4$        & $4^{4}$               & $2, 4^{3}$ \\
                 & $1, 3$     & $1^{2}, 3^{2}, 4^{2}$ & $3^{2}, 4^{2}$ \\
                 & $2^{2}$    & $2^{8}$               & $1^{2}, 2^{6}$ \\
                 & $1^{2}, 2$ & $1^{4}, 2^{6}$        & $1^{2}, 2^{6}$ \\
                 & $1^{4}$    & $1^{16}$              & $1^{14}$ \\
								 & & & \\
$n = 5$, $p = 5$ & $5$        & $5^{5}$ & $3, 5^{4}$ \\
                 & $1, 4$     & $1^{2}, 4^{2}, 5^{3}$ & $4^{2}, 5^{3}$ \\
                 & $2, 3$     & $1^{2}, 2^{2}, 3^{2}, 4^{2}, 5$ & $2^{2}, 3^{2}, 4^{2}, 5$ \\
                 & $1^{2}, 3$ & $1^{5}, 3^{5}, 5$ & $1^{3}, 3^{5}, 5$ \\
                 & $1, 2^{2}$ & $1^{5}, 2^{4}, 3^{4}$ & $1^{3}, 2^{4}, 3^{4}$ \\
                 & $1^{3}, 2$ & $1^{10}, 2^{6}, 3$ & $1^{8}, 2^{6}, 3$ \\
                 & $1^{5}$    & $1^{25}$ & $1^{23}$ \\
								 & & & \\
$n = 6$, $p = 2$ & $6$ & $2^{2}, 8^{4}$ & $2, 8^{4}$ \\
                 & $1, 5$ & $1^{2}, 4^{2}, 5^{2}, 8^{2}$ & $4^{2}, 5^{2}, 8^{2}$ \\
                 & $2, 4$ & $2^{2}, 4^{8}$ & $2, 4^{8}$ \\
                 & $1^{2}, 4$ & $1^{4}, 4^{8}$ & $1^{2}, 4^{8}$ \\
                 & $3^{2}$ & $1^{4}, 4^{8}$ & $1^{2}, 4^{8}$ \\
                 & $1, 2, 3$ & $1^{2}, 2^{6}, 3^{2}, 4^{4}$ & $2^{6}, 3^{2}, 4^{4}$ \\
                 & $1^{3}, 3$ & $1^{10}, 3^{6}, 4^{2}$ & $1^{8}, 3^{6}, 4^{2}$ \\
                 & $2^{3}$ & $2^{18}$ & $2^{17}$ \\
                 & $1^{2}, 2^{2}$ & $1^{4}, 2^{16}$ & $1^{2}, 2^{16}$ \\
                 & $1^{4}, 2$ & $1^{16}, 2^{10}$ & $1^{14}, 2^{10}$ \\
                 & $1^{6}$ & $1^{36}$ & $1^{34}$ \\
								 & & & \\
$n = 6$, $p = 3$ & $6$ & $3^{3}, 9^{3}$ & $1, 3^{2}, 9^{3}$ \\
                 & $1, 5$ & $1^{2}, 3, 5^{3}, 7, 9$ & $3, 5^{3}, 7, 9$ \\
                 & $2, 4$ & $1^{2}, 3^{4}, 5^{3}, 7$ & $3^{4}, 5^{3}, 7$ \\
                 & $1^{2}, 4$ & $1^{5}, 3, 4^{4}, 5, 7$ & $1^{3}, 3, 4^{4}, 5, 7$ \\
                 & $3^{2}$ & $3^{12}$ & $1, 3^{11}$ \\
                 & $1, 2, 3$ & $1^{2}, 2^{2}, 3^{10}$ & $2^{2}, 3^{10}$ \\
                 & $1^{3}, 3$ & $1^{9}, 3^{9}$ & $1^{7}, 3^{9}$ \\
                 & $2^{3}$ & $1^{9}, 3^{9}$ & $1^{7}, 3^{9}$ \\
                 & $1^{2}, 2^{2}$ & $1^{8}, 2^{8}, 3^{4}$ & $1^{6}, 2^{8}, 3^{4}$ \\
                 & $1^{4}, 2$ & $1^{17}, 2^{8}, 3$ & $1^{15}, 2^{8}, 3$ \\
                 & $1^{6}$ & $1^{36}$ & $1^{34}$ \\
\hline
\end{tabular}

\end{table}

\begin{remark}Suppose that $p = 2$. Let $G = \Sp(V)$ with $\dim V = 2l$, and let $u \in G$ be a unipotent element. For example by \cite[Lemma 4.8.2]{McNinch}, we have $$\wedge^2(V) \cong \begin{cases} L_G(\varpi_2) \oplus L_G(0) &\mbox{if } l \mbox{ is odd,} \\ L_G(0) | L_G(\varpi_2) | L_G(0) \text{ } & \mbox{if } l \mbox{ is even.} \end{cases}$$ What are the Jordan block sizes of $u$ acting on $L_G(\varpi_2)$? Here one could hope for an answer in terms of the Jordan block sizes of $u$ acting on $\wedge^2(V)$, which are known \cite[Theorem 2]{GowLaffey}. 

We have not included results on this problem in this paper, but shall note the following example which demonstrates that the situation here is slightly more involved than in characteristic $p > 2$. Suppose that $\dim V = 4$. In this case, there are two conjugacy classes of unipotent elements of $G$ which act on $V$ with Jordan form $2 \cdot V_2$. It is easy to see that unipotent elements in both conjugacy classes must act on $\wedge^2(V)$ with Jordan form $2 \cdot V_1 \oplus 2 \cdot V_2$. However, a computation shows that elements from one of the classes act on $L_G(\varpi_2)$ with Jordan form $2 \cdot V_2$, while elements of the other unipotent class act on $L_G(\varpi_2)$ with Jordan form $2 \cdot V_1 \oplus V_2$.\end{remark}

\section{Acknowledgements}



I am very grateful to Donna Testerman for many helpful comments and discussions. I would also like to thank Gunter Malle for pointing out some misprints in an earlier version of this paper.

\providecommand{\bysame}{\leavevmode\hbox to3em{\hrulefill}\thinspace}
\providecommand{\MR}{\relax\ifhmode\unskip\space\fi MR }
\providecommand{\MRhref}[2]{%
  \href{http://www.ams.org/mathscinet-getitem?mr=#1}{#2}
}
\providecommand{\href}[2]{#2}

\end{document}